\documentclass[a4paper,reqno,10pt]{amsart}

\usepackage[utf8]{inputenc} 
\usepackage[T1]{fontenc} 
\usepackage[english]{babel} 
\usepackage{amsmath} 
\usepackage{amsthm}
\usepackage{amssymb}

\usepackage{nicefrac} 
\usepackage{hyperref}
\usepackage[margin=30mm]{geometry}
 \usepackage{mathrsfs}  
\usepackage{graphicx}
\usepackage{braket}
\usepackage{subfig}
\usepackage{csquotes}
\usepackage{comment}
\usepackage{xcolor}
% \usepackage[style=numeric,backend=biber]{biblatex}
%\addbibresource{bibriassunto.bib}

\newcommand{\abs}[1]{\left|#1\right|}
\newcommand{\norm}[1]{\left \| #1\right \|}
\newcommand{\R}{\mathbb{R}}
\newcommand{\eps}{\epsilon }
\newcommand{\ue}{U_\delta}

\newcommand{\ud}{U_{\delta, \xi}}
\newcommand{\vd}{V_{\delta, \xi}}
\newcommand{\phid}{\Phi_{\delta, \xi}}
\newcommand{\psid}{\Psi_{\delta, \xi}}

\theoremstyle{plain}
\newtheorem{theorem}{Theorem}[]

\theoremstyle{plain}

\theoremstyle{plain}
\newtheorem{lemma}{Lemma}[]

\theoremstyle{plain}
\newtheorem{corollary}{Corollary}[]

\theoremstyle{plain}
\newtheorem{proposition}{Proposition}[]

\theoremstyle{remark}
\newtheorem{remark}{Remark}[]

\title[On a critical Hamiltonian system  with Neumann boundary conditions]{On a critical Hamiltonian system  with Neumann boundary conditions}
\author{Angela Pistoia and Delia Schiera}
\date{}

\begin{document}
\maketitle
\begin{abstract}
We consider the Hamiltonian  system with Neumann boundary conditions: 
\[ -\Delta u + \mu u=v^{q }, \quad 
-\Delta v+ \mu v=u^{p} \quad \text{ in $\Omega$}, \qquad 
 u, v >0 \quad  \text{ in $\Omega$,}
\qquad \partial_\nu u= \partial_\nu v=0 \quad  \text{ on $\partial \Omega$, }
\]
where $\mu >0$ is a parameter and $\Omega$ is a smooth bounded domain in $\mathbb R^N .$
 When $(p, q)$ approaches from below  the critical hyperbola $N/(p+1) + N/(q+1)=N-2$, we  build  a  solution
which blows-up at a boundary point where the mean curvature achieves its minimum and negative value. 
\end{abstract}

\medskip 
\noindent 
\textit{Keywords:} 
 Hamiltonian elliptic system, Critical hyperbola, Neumann boundary conditions, Blowing-up solutions
\\
\textit{2020 Mathematics Subject Classification:}
35B33, 35B44, 35J57.

\section{Introduction and main result} 

This paper deals with the Hamiltonian system
\begin{equation}\label{s}
\begin{cases}
-\Delta u + \mu u= v^q & \text{ in $\Omega$}\\
-\Delta v+ \mu v=u^p & \text{ in $\Omega$}\\
  u, v >0 & \text{ in $\Omega$}\\
\partial_\nu u= \partial_\nu v=0 & \text{ on $\partial \Omega$.}
\end{cases}
\end{equation}
where $\Omega$ is a smooth bounded domain in $\mathbb R^n$, $p,q>1$ and $\mu>0$ is a positive parameter.
\\

The study of the system \eqref{s} is inspired by the widely studied equation
\begin{equation}\label{equ}
-\Delta u + \mu u= u^p\   \hbox{in}\ \Omega,\quad \partial_\nu u=0 \ \hbox{on}\ \partial \Omega.
\end{equation}

A huge number of results concerning existence and multiplicity of solutions of \eqref{equ} has been found starting from the pioneering paper \cite{lnt}.
Let us make a brief summary of the main results.
First of all, it is immediate to check that \eqref{equ} always possesses a constant solution for any $p>1$.\\
In the sub-critical regime, i.e. $p\in (1,+\infty)$ if $n=1,2$ or $p\in(1,\frac{n+2}{n-2})$ if $n\ge3$, there also exists a least energy solution $u_\mu$ which is not constant 
when the parameter $\mu$ is large enough as established in the serie  of papers \cite{lnt,nt1,nt2}. In particular, it has been proved that such a solution   exhibits a concentration phenomena   around  a boundary point, namely
$$u_\mu(x)\sim \mu^{1\over p-1}U\left(\mu^\frac12(x-x_0)\right)\ \hbox{as}\ \mu\to\infty$$
where $U$ is the unique positive radial solution to 
$$-\Delta U+U=U^p\ \hbox{in}\ \mathbb R^n$$
and the point $x_0\in\partial\Omega$ is the maximum point of  the mean curvature of $\partial \Omega.$  Successively, higher energy solutions    having such an asymptotic profile near
one or several points lying on  the boundary  $\partial\Omega$  or in  the interior of $\Omega$ have been built
 (see for instance \cite{dy,dfw,gpw,gw,lnw} and references
therein). Finally, it is worthwhile to point out that when the parameter $\mu$ is small enough the constant solution is the unique one as showed  in \cite{lnt}.
\\
In the critical regime, i.e. $p= {n+2\over n-2}$ and $n\ge3$, the existence of a least energy solution $u_\mu$ when $\mu$ is large enough has been established in \cite{am,w} and its asymptotic behaviour as $\mu\to\infty$ has been studied in  \cite{apy,npt,r}. In particular, it was proved that  $u_\mu$ blows-up at a boundary point, namely
$$u_\mu(x)\sim \delta^{-{n-2\over 2}}U\left( x-x_0\over\delta\right)\ \hbox{as}\ \mu\to\infty$$
where the bubble $U(x)=\alpha_n\frac1{(1+|x|^2)^{n-2\over2}}$, $\alpha_n=(n(n-2))^{\frac{n-2}4}$ solves  
$$-\Delta U =U^{n+2\over n-2}\ \hbox{in}\ \mathbb R^n,$$
the point $x_0\in\partial\Omega$ is the maximum point of  the mean curvature of $\partial \Omega $  and the positive parameter $\delta=\delta(\mu)\to0$ as $\mu\to\infty.$
Successively, solutions with this type of bubbling behavior around one or more boundary points 
have been built in a serie of papers \cite{am2,amy,g,gg,r2,wzi}.
Finally, we wish to point out that Lin and Ni in \cite{linni} conjectured that it was possible to extend to the critical regime the uniqueness result proved in the sub-critical one   when $\mu$ is small enough, i.e.  the constant solution is the unique solution to \eqref{equ} when $\mu\to0.$
However, the situation in the critical regime is completely different and the uniqueness of the solution is not true anymore, being closely related to   the geometry of the domain and also on the dimension $n$ as showed in   \cite{ay1,ay2,ay3,bkp,drw,wwy1,wwy2}.
\\
Finally, we wish to point out that
whereas all the previous results, concerned with
peaked solutions, always assume that $\mu$ goes to infinity, it is  also possible to prove that a blowing-up 
 solution of problem \eqref{equ} may exist for fixed $\mu$, provided that  the exponent $p$ is close to
the critical one, i.e.   $p={n+2\over n-2}\pm\epsilon$ and $\epsilon>0$ is a small parameter.
In particular, solutions blowing-up at {\em stable} critical points (including strict maxima and minima)  of the mean curvature have been built   in the slightly supercritical or  subcritical  regime if the  mean curvature critical value is positive or  negative, respectively (see 	\cite{dmp,rewe}).\\

Now, let us go back to the system \eqref{s}. 
We observe that system \eqref{s}   has a constant solution whatever $p >1$ and $q>1$ are. 
The role of the critical exponent $\frac{N+2}{N-2}$ is played by the   critical \textit{hyperbola} in the plane $p, q$: 
\begin{equation}\label{CH} \frac{1}{p+1} + \frac{1}{q+1} = \frac{N-2}{N}, \end{equation}
see \cite{BdST} for a survey on Lane--Emden systems with Dirichlet boundary conditions, and references therein.
 When $(p,q)$ lies below the critical hyperbola
the system has a variational structure. However,  the  functional naturally associated to \eqref{s}
  is strongly indefinite and finding a non-costant solution is not a simple matter. 
  In the sub-critical regime, i.e. 
$$ \frac{1}{p+1} + \frac{1}{q+1}> \frac{N-2}{N},$$
collecting the results obtained in  \cite{ay,ry,pr} we can claim that if $\mu$ is large enough the system \eqref{s} has a  solution $(u_\mu,v_\mu)$ 
which  exhibits  a   concentration phenomena  around a boundary  point, namely
$$u_\mu(x)\sim \mu^{1\over q-1}U\left(\mu^\frac12(x-x_0)\right),\ v_\mu(x)\sim \mu^{1\over p-1}V\left(\mu^\frac12(x-x_0)\right)\ \hbox{as}\ \mu\to\infty$$
where $(U,V)$ is the unique positive radial solution to the system
\begin{equation}\label{subsys}-\Delta U+U=V^q,\ -\Delta V+V=U^q\ \hbox{in}\ \mathbb R^n\end{equation}
and the point $x_0\in\partial\Omega$ is the maximum point of  the mean curvature of $\partial \Omega.$ \\

As far as we know, there are a few results concerning the critical case,
i.e. $$ \frac{1}{p+1} + \frac{1}{q+1}=\frac{N-2}{N}.$$
When $\mu=0$  a    solution (which has to change sign in this case) to \eqref{s} has been found     in \cite{ST2} in the subcritical case,  	in \cite{PST} in the critical case and in \cite{guopeng} in the slightly supercritical case.  When $\mu>0$  we refer the reader to \cite{guowy} for  the existence of infinitely many solutions in a symmetric domain through a Lyapunov-Schmidt approach and to   \cite{BST} where the authors find a pair of nondecreasing (and non-costant) radial solutions in the ball.
The existence of non-costant  solutions  when the parameter $\mu$ is large enough is still an interesting open problem.
\\

If we look at the results obtained for the single equation, many natural question arises. Here we list some of them which could be interesting for future research.  
{\it \begin{itemize}
\item[(Q1)] Do there exist solutions concentrating at  different boundary and/or interior points in the sub-critical    regime when $\mu\to\infty$?
\item[(Q2)] Do there exist solutions blowing-up at a boundary point  in the critical regime when $\mu\to\infty$?
\item[(Q3)] Do there exist solutions blowing-up at a boundary point   when $\mu>0$ is fixed and $(p,q)$ approaches the critical hyperbola?
\item[(Q4)] Is the constant the unique solution to \eqref{s} when $\mu$ is small in the sub-critical and/or the critical regime?
\end{itemize}}

In this paper we address question (Q3)  and we prove that   the  almost critical system,
\begin{equation}\label{system}
\begin{cases}
-\Delta u + \mu u=v^{q-\epsilon } & \text{ in $\Omega$}\\
-\Delta v+ \mu v=u^{p-\epsilon } & \text{ in $\Omega$}\\
\, u, v >0 & \text{ in $\Omega$}\\
\partial_\nu u= \partial_\nu v=0 & \text{ on $\partial \Omega$,}
\end{cases}
\end{equation}
when $(p, q)$ belongs to the critical hyperbola \eqref{CH}, possesses a positive solution which blows-up at a boundary point   as the positive parameter $\epsilon \to0$.
 \\
 Before stating our main result, it is necessary to introduce the bubbles which rule the bubbling behaviour in the critical case.
 It is known that the critical system
\begin{equation}\label{crisys}\begin{cases}
-\Delta U=V^q & \text{ in $\R^N$} \\
-\Delta V=U^p & \text{ in $\R^N$,}
\end{cases} \end{equation}
has a unique positive radial solution $(U,V)$ with  $U(0)=1$  and infinitely many positive solutions $\left(U_{\delta,\xi},V_{\delta,\xi}\right)$
obtained as 
\begin{equation}\label{bubbles}U_{\delta,\xi}(x):=\delta^{-{N \over q+1}} U \left(x-\xi\over\delta\right),\ V_{\delta,\xi}(x):=\delta^{-{N \over p+1}} V\left(x-\xi\over\delta\right),\ \delta>0,\ \xi\in\mathbb R^n.
\end{equation}
Moreover, we also introduce the function space 
\begin{equation}\label{xxx} X:=W_\nu^{2, \frac{q+1}{q}}(\Omega) \times  W_\nu^{2, \frac{p+1}{p}}(\Omega) \end{equation}
where  
\[ W^{2, s}_\nu(\Omega):=\{u \in W^{2, s}(\Omega): \partial _\nu u=0 \text{ on } \partial \Omega \} \]
which is  endowed with the norm
\[ \norm{(u, v)}_X:= \norm{-\Delta u + \mu u}_{ \frac{q+1}{q}} + \norm{-\Delta v + \mu v}_{\frac{p+1}{p}}. \]
Here and in the following we set $\|\cdot\|_s:=\|\cdot\|_{L^s(\Omega)}.$
 Finally, we introduce the projection $Pu\in W^{2, s}_\nu(\Omega)$ of a function $u\in W^{2,s}(\mathbb R^n)$ as the solution of the Neumann boundary problem
\begin{equation}\label{proj}-\Delta Pu+\mu Pu=-\Delta u \ \hbox{in}\  \Omega,\quad \partial_\nu u=0\ \hbox{on}\ \partial \Omega.\end{equation} Our main result reads as follows. 
\begin{theorem}\label{main}
Let $N \ge 4$, $p \ge q > 1$   and $q \ge \frac{4}{N-2}$. 
Let $\xi_0 \in \partial \Omega$ be a non degenerate critical point of the mean curvature $H$ with $H(\xi_0)<0$. 
Then if $\epsilon $ is small enough, the system \eqref{system} admits a solution $(u_\epsilon , v_\epsilon )$ which blows-up at $\xi_0$ as $\epsilon $ goes to zero, i.e. there exist $\xi_\epsilon \in\partial\Omega,$ $\xi_\epsilon \to\xi_0$ and $\delta_\epsilon \to0$ such that
$$\left\|(u_\epsilon ,v_\epsilon )- \left(PU_{\delta_\epsilon ,\xi_\epsilon },PV_{\delta_\epsilon ,\xi_\epsilon }\right)\right\|_X\to0
\ \hbox{as}\ \epsilon \to0.$$
\end{theorem}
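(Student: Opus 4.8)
The plan is to construct the solution via a finite-dimensional Lyapunov--Schmidt reduction, following the scheme used for the single equation in \cite{dmp,rewe} but adapted to the strongly indefinite Hamiltonian structure of \eqref{system}. First I would rewrite \eqref{system} as a fixed-point problem in the space $X$ defined in \eqref{xxx}: using the projection operator \eqref{proj}, the system becomes $(u,v)=\mathcal{N}_\epsilon(u,v)$ where $\mathcal{N}_\epsilon$ is a compact operator built from the inverse of $(-\Delta+\mu)$ with Neumann conditions composed with the nonlinearities $v\mapsto v_+^{q-\epsilon}$, $u\mapsto u_+^{p-\epsilon}$. The natural ansatz is $(u,v)=(PU_{\delta,\xi},PV_{\delta,\xi})+(\phi,\psi)$, where $\xi\in\partial\Omega$ lies near $\xi_0$, the concentration parameter is scaled as $\delta=\Lambda\epsilon$ (or a suitable power of $\epsilon$) with $\Lambda$ in a compact subset of $(0,\infty)$, and $(\phi,\psi)$ is a lower-order remainder.

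The key steps, in order, are: (i) \emph{Expansion of the error.} Estimate $\|\mathcal{S}_\epsilon(PU_{\delta,\xi},PV_{\delta,\xi})\|_X$, where $\mathcal{S}_\epsilon$ denotes the ``error'' operator measuring how far the projected bubbles are from solving \eqref{system}; this requires the projection estimates for $PU_{\delta,\xi},PV_{\delta,\xi}$ near a boundary point, i.e. sharp expansions $PU_{\delta,\xi}=U_{\delta,\xi}-\delta^{\#}\varphi_{\delta,\xi}+\dots$ where the correction involves the mean curvature $H(\xi)$, exactly as in the single-equation case but now paired. (ii) \emph{Invertibility of the linearized operator.} Linearize $\mathcal{S}_\epsilon$ at the approximate solution and show it is invertible, with uniformly bounded inverse, on the subspace $K_{\delta,\xi}^{\perp}$ orthogonal to the $(N+1)$-dimensional kernel generated by $\partial_\delta(PU_{\delta,\xi},PV_{\delta,\xi})$ and the tangential derivatives $\partial_{\xi_j}(PU_{\delta,\xi},PV_{\delta,\xi})$; this rests on the nondegeneracy of the limiting entire solution $(U,V)$ of \eqref{crisys}, together with the spectral analysis of the linearized Hamiltonian system and a compactness-contradiction argument in $X$. (iii) \emph{Fixed point for the remainder.} By the contraction mapping principle, solve the projected equation $\Pi_{\delta,\xi}^{\perp}\mathcal{S}_\epsilon(PU_{\delta,\xi}+\phi,PV_{\delta,\xi}+\psi)=0$ for a unique small $(\phi,\psi)=(\phi_{\delta,\xi,\epsilon},\psi_{\delta,\xi,\epsilon})$ depending smoothly on $(\delta,\xi)$, with $\|(\phi,\psi)\|_X=o(\text{error})$. (iv) \emph{Reduced finite-dimensional problem.} Plug this remainder back and reduce to finding critical points of a function $J_\epsilon(\delta,\xi)$ on $(0,\infty)\times(\partial\Omega\text{ near }\xi_0)$, whose expansion has the form $J_\epsilon(\Lambda\epsilon,\xi)=c_0+\epsilon\big(c_1\Lambda^{a}H(\xi)+c_2\Lambda^{b}\big)+o(\epsilon)$ with $c_1<0$; since $H(\xi_0)<0$, the leading term in $\Lambda$ has an interior maximum (or minimum) at some $\Lambda_0>0$, and the nondegenerate critical point $\xi_0$ of $H$ persists under the $o(\epsilon)$ perturbation, giving a $C^0$-stable critical point $(\Lambda_\epsilon,\xi_\epsilon)$.

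The main obstacle I expect is step (ii), the uniform invertibility of the linearized operator in the functional-analytic setting $X=W_\nu^{2,\frac{q+1}{q}}\times W_\nu^{2,\frac{p+1}{p}}$. Because the Hamiltonian functional is strongly indefinite, one cannot work in a single Hilbert space with a self-adjoint linearization; instead the linearization couples the two equations and must be analyzed as a Fredholm operator of index zero between the two different $L^s$-based Sobolev spaces. Establishing that the kernel is exactly the expected $(N+1)$-dimensional space, and that the bound on the inverse does not degenerate as $\epsilon\to0$ (equivalently as $\delta\to0$), requires a careful rescaling argument: blow up around $\xi_\epsilon$, pass to the limit to recover the linearized system of \eqref{crisys} on $\R^N$ or on a half-space with Neumann condition, and invoke the nondegeneracy result for $(U,V)$. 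The condition $q\ge\frac{4}{N-2}$ (and $N\ge4$) enters precisely here and in the error estimate of step (i): it guarantees that the nonlinear terms are regular enough (the exponents $q-1$, $p-1$ large enough relative to the decay of the bubbles) for the contraction estimates and the $C^1$-dependence on the parameters to hold in the chosen norm. A secondary technical point is obtaining the sharp boundary-projection expansions with the explicit mean-curvature dependence for \emph{both} components simultaneously, since the two bubbles decay at different rates $\delta^{-N/(q+1)}$ and $\delta^{-N/(p+1)}$; matching the orders of these two contributions in the reduced energy is what pins down the admissible range of $(p,q)$ and the scaling $\delta\sim\epsilon$.
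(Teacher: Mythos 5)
Your proposal follows essentially the same Lyapunov--Schmidt strategy as the paper: projected bubbles with $\delta\sim\epsilon$, invertibility of the linearization via the non-degeneracy of the entire solution $(U,V)$ from \cite{fkp}, contraction mapping for the remainder, and reduction to a finite-dimensional variational problem in $(d,\xi)$ where $H(\xi_0)<0$ produces a stable critical point. Two small corrections to keep in mind if you flesh this out: since $\xi$ is constrained to $\partial\Omega$ the approximate kernel is $N$-dimensional (one dilation plus $N-1$ tangential derivatives), not $N+1$; and the reduced energy is not of the pure power form $c_1\Lambda^a H(\xi)+c_2\Lambda^b$ but rather $\Theta(d,\xi)=-\mathfrak c_4 H(\xi)\,d-\mathfrak c_2\ln d$ (after extracting $\mathfrak c_1-\mathfrak c_2\,\epsilon\ln\epsilon+\mathfrak c_3\epsilon$), coming from the $\epsilon\ln\delta$ contribution of the almost-critical exponent, and it is precisely the balance between the linear-in-$d$ curvature term and the $\ln d$ term that yields $d_0=-\mathfrak c_2/(\mathfrak c_4 H(\xi_0))>0$ and thereby forces $H(\xi_0)<0$.
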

 
 Let us make some comments concerning our result and the listed questions.
 
 \begin{remark}
 We point out that   if the domain is not convex   the mean curvature achieves its minimum at a point with negative curvature.
 We also observe that the assumption on the non-degeneracy of the critical points of the mean curvature is generically true.
  Indeed the mean curvature is a Morse function   for generic domains as proved in \cite{mipi}.
\\ Therefore, we can claim that for a generic not convex domain the slightly subcritical system \eqref{system} has a solution which blows-up at    the minimum point of the mean curvature of the boundary.
 \end{remark}
 
 \begin{remark}
 The proof relies on a classical Lyapunov--Schmidt reduction method. As usual, the starting point of the procedure consists in finding a good ansatz, which is
 the projection of the bubbles  \eqref{bubbles} solutions to the critical system \eqref{crisys} onto the Banach space $X$ defined in \eqref{xxx} in order to fit the Neumann boundary condition. A careful expansion of this projection is carried out in the Appendix \ref{app}. 
Next,   the key ingredient   turns out to be the {\em non-degeneracy} of the bubbles which has been proved in \cite{fkp}.  \\
It is worthwhile to point out  this argument cannot be used to produce    concentrating solutions in the sub-critical regime, because the non-degeneracy of the solution of the sub-critical system \eqref{subsys} (which is the natural ansatz)  is  a challenging open problem.
 \end{remark}
 
\begin{remark}
The requirement $p, q>1$ is necessary in order to perform the Lyapunov--Schmidt method. The   restriction on $q$, namely $q\ge2$ if $N=4$ or $q\ge\frac43$ if $N=5$, is due to technical reasons  and corresponds to requiring $\gamma \ge 1$ (see \eqref{gamma}) in the expansion in Proposition \ref{prop:stima phie}. Indeed, if $\gamma<1$ the first term in the expansion of $PU_{\delta_\epsilon ,\xi_\epsilon } -U_{\delta_\epsilon ,\xi_\epsilon }$ is no longer leading over the error term. 
%second term of the expansion of the projection of the bubble is no longer true.
\end{remark}

\begin{remark}
We observe that in  \cite{guopeng} the authors consider a slightly super-critical problem when $\mu=0$ and the domain $\Omega$ is a ball.
We strongly believe that combining their ideas with ours (see also Remark \ref{sopra}) it would be possible to prove that the system \eqref{system} in the slightly super-critical regime, i.e. $\epsilon $ is a {\em negative} small parameter,  has a solution which blows-up at a non-degenerate critical point of the mean curvature with positive level. This would extend the results
proved in \cite{dmp,rewe} for the almost critical equation to the almost critical system. In particular,   Remark \ref{sopra} suggests that the smallest exponent  $q$ (in our case we are assuming $p\ge q$) settles on  the sub-criticality or the super-criticality of the system.
\end{remark}

The proof relies on the Lyapunov-Schmidt procedure as developed in \cite{kimpi}.  To make reading simpler, we decide to sketch the main steps of the proof and  omit all the details which can be found up to minor modifications in  \cite{kimpi,r,rewe}. We only show what can not be found in the  literature.\\

\section{Proof of Theorem \ref{main}} 
\subsection{Setting of the problem}
We look for solutions to \eqref{system} in  the space $X$ defined in \eqref{xxx}. Given the embedding $\mathcal I:X\hookrightarrow L^{q+1\over q}(\Omega)\times L^{p+1\over p}(\Omega)$  we define its  formal adjoint   $\mathcal I^*: L^{q+1 }(\Omega)\times L^{p+1}(\Omega)\to X$ as
$$\mathcal I^*(f,g)=(u,v)\ \hbox{if and only if}\ \left\{\begin{aligned}&-\Delta u + \mu u=f \ \hbox{in} \ \Omega\\
&-\Delta v + \mu v=g \ \hbox{in} \ \Omega\\  &\partial_\nu u=\partial_\nu v=0 \ \hbox{on}\ \partial\Omega.\end{aligned}\right.$$
By classical regularity results  \cite[Lemma 2.1, Remark 2.1--2.2]{CK} and \cite[Theorem 15.2]{Agmon} we have that $\mathcal I^*$ is well defined and continuous, i.e.
$$\|\mathcal I^*(h,g)\|_X\le C \left(\|h\|_{q+1}+\|g\|_{p+1}\right).$$
where the constant $C$ only depends on $\mu,p,q$ and $\Omega.$
\\
Let us rewrite problem \eqref{system} as
\begin{equation}\label{sys-re}
(u,v)=\mathcal I^*(f_\epsilon(v),g_\epsilon(u)),\ \hbox{with}\ f_\epsilon(v):=(v^+)^{q-\epsilon}\ \hbox{and}\ g_\epsilon(u):=(u^+)^{p-\epsilon}
\end{equation}
The positivity of the solution   follows by the maximum principle. Finally, we look for a solution to \eqref{sys-re} as
\begin{equation}\label{ans}(u,v)=\left(PU_{\delta, \xi},P V_{\delta, \xi}\right)+(\phi,\psi),\ \hbox{with}\ 
 \delta=d\epsilon\ \hbox{being}\ d>0\ \hbox{and}\ \xi\in\partial\Omega.\end{equation}
Here $\left(PU_{\delta, \xi},P V_{\delta, \xi}\right):=\mathcal I^*\left(f_0(V_{\delta, \xi}),g_0(U_{\delta, \xi})\right)$ (see also \eqref{proj})
and $(U_{\delta, \xi}, V_{\delta, \xi})$ is the bubble defined in \eqref{bubbles}.
The choice of $\delta$ is motivated by the expansion of the reduced energy in \eqref{expansion energy}.
The higher order term $(\phi,\psi)$ belongs to the  space $Z_{\delta, \xi}$ defined as follows.
\\ First, it is useful to remind that all the solutions to the linear system
\[ \begin{cases}
-\Delta \Psi =p \ud^{p-1} \Phi &\text{ in } \R^N \\
-\Delta \Phi =q \vd^{q-1} \Psi &\text{ in } \R^N.
\end{cases} \]
  are linear combination of (see \cite{fkp})
\[ \left( \frac{\partial \ud}{\partial \delta},  \frac{\partial \vd}{\partial \delta}\right)\ \hbox{and} \ \left( \frac{\partial \ud}{\partial \xi_i},  \frac{\partial \vd}{\partial \xi_i}\right),\  i=1,\dots,N.\]
Now, since the point $\xi$ belongs to the boundary we shall choose an orthonormal system of coordinates $\tau_1,\dots,\tau_{N-1}$ of the tangent space to $\partial\Omega$ at $\xi$ and  define the functions
$$(\Phi_{\delta, \xi}^0,\Psi_{\delta, \xi}^0):=\left( \frac{\partial \ud}{\partial \delta},  \frac{\partial \vd}{\partial \delta}\right)\ \hbox{and}\ 
(\Phi_{\delta, \xi}^i,\Psi_{\delta, \xi}^i):=\left( \frac{\partial \ud}{\partial \tau_i},  \frac{\partial \vd}{\partial \tau_i}\right),\  i=1,\dots,N-1.$$
Set $\left(P\Phi_{\delta, \xi}^i,P \Psi_{\delta, \xi}^i\right):=\mathcal I^*\left(p \ud^{p-1} \Phi_{\delta, \xi}^i  , q \vd^{q-1} \Psi_{\delta, \xi}^i\right)$
and define the space
\[ Y_{\delta, \xi}:= \text{span} \left\{ \left(P \Phi_{\delta, \xi}^i, P \Psi_{\delta, \xi}^i\right) ,\ i=0,1,\dots,N-1\right\} \]
 and  
\begin{align*} Z_{\delta, \xi}&=\{ (\phi, \psi) \in X: \,p \int_\Omega \ud^{p-1} \Phi_{\delta, \xi}^i \phi +  q\int_\Omega \vd^{q-1} \Psi_{\delta, \xi}^i \psi=0,  \forall \ i \} \\
& =\{ (\phi, \psi) \in X: \int_\Omega \nabla P \Psi_{\delta, \xi}^i  \nabla \phi +\mu \int_\Omega P \Psi_{\delta, \xi}^i \phi + \int_\Omega \nabla P \Phi_{\delta, \xi}^i  \nabla \psi +\mu \int_\Omega P \Phi_{\delta, \xi}^i \psi=0, \  \forall\ i  \}. \end{align*}
Next, we introduce the projections $\Pi_{\delta,\xi}:X\to Y_{\delta, \xi}$ and $\Pi^\perp_{\delta,\xi}:X\to Z_{\delta, \xi}$
and write the single equation \eqref{sys-re} as a system 
\begin{equation}\label{sys-rere-1}
\Pi^\perp_{\delta,\xi}\left\{\left(PU_{\delta, \xi}+\phi,P V_{\delta, \xi}+\psi\right)-\mathcal I^*\left[\left(f_\epsilon\left(P V_{\delta, \xi}+\psi\right),g_\epsilon\left(PU_{\delta, \xi}+\phi\right)\right)\right]\right\}=0
\end{equation}
and
\begin{equation}\label{sys-rere-2}
\Pi _{\delta,\xi}\left\{\left(PU_{\delta, \xi}+\phi,P V_{\delta, \xi}+\psi\right)-\mathcal I^*\left[\left(f_\epsilon\left(P V_{\delta, \xi}+\psi\right),g_\epsilon\left(PU_{\delta, \xi}+\phi\right)\right)\right]\right\}=0.
\end{equation}
\subsection{Solving (\ref{sys-rere-1})}
First of all, we write \eqref{sys-rere-1}
as 
$$\mathscr L (\phi,\psi)=\mathscr N(\phi,\psi)+\mathscr R$$
where the linear operator $\mathscr L $ is defined as
$$\mathscr L (\phi,\psi):=\Pi^\perp_{\delta,\xi}\left\{\left( \phi, \psi\right)-\mathcal I^*\left[\left(f'_\epsilon\left(P V_{\delta, \xi}\right)\psi,g'_\epsilon\left(PU_{\delta, \xi}\right)\phi\right)\right]\right\},
$$
the nonlinear term $\mathscr N$ is defined as
$$\begin{aligned}\mathscr N(\phi,\psi):=&\Pi^\perp_{\delta,\xi}\left\{
\mathcal I^*\left[
\left(f_\epsilon\left(P V_{\delta, \xi}+\psi\right)-f_\epsilon\left(P V_{\delta, \xi}\right)-f'_\epsilon\left(P V_{\delta, \xi}\right)\psi,\right.\right.\right.
\\ &\left.\left.\left.\qquad\qquad g_\epsilon\left(PU_{\delta, \xi}+\phi\right)-g_\epsilon\left(PU_{\delta, \xi} \right)-g'_\epsilon\left(PU_{\delta, \xi}\right)\phi \right)
\right]\right\}\end{aligned}$$
and the remainder term $\mathscr R$ is 
$$\mathscr R:=\Pi^\perp_{\delta,\xi}\left\{\mathcal I^*\left[\left(f_\epsilon\left(P V_{\delta, \xi}\right)-f_0\left( V_{\delta, \xi}\right),
g_\epsilon\left(P U_{\delta, \xi}\right)-g_0\left( U_{\delta, \xi}\right)
\right)\right]\right\}.$$
Using quite standard arguments we can prove the invertibility of the linear operator $\mathscr L$.
\begin{proposition}
For any $\eta>0$ there exists $\epsilon_0>0$ and   $C>0$ such that for any $\epsilon\in(0,\epsilon_0)$,  $d\in[\eta,1/\eta]$ and $\xi\in\partial\Omega$
$$\|\mathscr L(\phi,\psi)\|_X\ge C\|(\phi,\psi)\|_X\ \hbox{for any }\ (\phi,\psi)\in Z_{\delta,\xi}.$$
\end{proposition}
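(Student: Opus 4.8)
The plan is to prove invertibility by the classical contradiction-plus-linearization scheme. Suppose the estimate fails: then there exist sequences $\epsilon_n\to 0$, $d_n\in[\eta,1/\eta]$, $\xi_n\in\partial\Omega$, and $(\phi_n,\psi_n)\in Z_{\delta_n,\xi_n}$ with $\|(\phi_n,\psi_n)\|_X=1$ but $\|\mathscr L(\phi_n,\psi_n)\|_X\to 0$. Passing to subsequences we may assume $d_n\to d_\infty\in[\eta,1/\eta]$ and $\xi_n\to\xi_\infty\in\partial\Omega$. Write $\mathscr L(\phi_n,\psi_n)=(\tilde\phi_n,\tilde\psi_n)$; unwinding the definition of $\mathscr L$ and of $\mathcal I^*$, this means that
\[
(\phi_n,\psi_n)-\mathcal I^*\!\left[\left(f'_{\epsilon_n}(PV_{\delta_n,\xi_n})\psi_n,\;g'_{\epsilon_n}(PU_{\delta_n,\xi_n})\phi_n\right)\right]=(\tilde\phi_n,\tilde\psi_n)+(\omega_n,\zeta_n)
\]
for some $(\omega_n,\zeta_n)\in Y_{\delta_n,\xi_n}$ coming from the projection $\Pi^\perp$, with $\|(\tilde\phi_n,\tilde\psi_n)\|_X\to 0$.

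The core is a \emph{blow-up analysis}. Rescale around $\xi_n$: set $\hat\phi_n(y):=\delta_n^{N/(q+1)}\phi_n(\xi_n+\delta_n y)$, $\hat\psi_n(y):=\delta_n^{N/(p+1)}\psi_n(\xi_n+\delta_n y)$, read in suitably flattened boundary coordinates, so that $PU_{\delta_n,\xi_n}$ and $PV_{\delta_n,\xi_n}$ rescale to $U$ and $V$ plus lower-order terms controlled by the Appendix expansions, and the half-space $\delta_n^{-1}(\Omega-\xi_n)$ exhausts $\R^N$ (or a half-space, then reflected). Standard elliptic estimates — using the continuity of $\mathcal I^*$ and $L^s$-regularity — give that $(\hat\phi_n,\hat\psi_n)$ is bounded in $W^{2,(q+1)/q}_{\mathrm{loc}}\times W^{2,(p+1)/p}_{\mathrm{loc}}$, hence converges weakly, and locally strongly in $L^{q+1}\times L^{p+1}$, to a limit $(\phi_\infty,\psi_\infty)$ which solves the limiting linear system
\[
-\Delta\psi_\infty=pU^{p-1}\phi_\infty,\qquad -\Delta\phi_\infty=qV^{q-1}\psi_\infty\quad\text{in }\R^N,
\]
with the right decay to belong to the natural energy space. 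By the \textbf{non-degeneracy} result of \cite{fkp}, $(\phi_\infty,\psi_\infty)$ is a linear combination of the $N+1$ generators $(\partial_\delta U_{\delta,\xi},\partial_\delta V_{\delta,\xi})$ and $(\partial_{\tau_i}U_{\delta,\xi},\partial_{\tau_i}V_{\delta,\xi})$. But the orthogonality conditions defining $Z_{\delta_n,\xi_n}$ pass to the limit and force $(\phi_\infty,\psi_\infty)$ to be orthogonal to exactly these generators, so $(\phi_\infty,\psi_\infty)=0$.

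It remains to upgrade this to a contradiction with $\|(\phi_n,\psi_n)\|_X=1$. Here one tests the equation $\mathscr L(\phi_n,\psi_n)=(\tilde\phi_n,\tilde\psi_n)+(\omega_n,\zeta_n)$ against $(\phi_n,\psi_n)$ itself, or equivalently uses the identity $\|(\phi_n,\psi_n)\|_X\le \|\mathscr L(\phi_n,\psi_n)\|_X+\|\mathcal I^*[(f'_{\epsilon_n}(PV_{\delta_n,\xi_n})\psi_n,g'_{\epsilon_n}(PU_{\delta_n,\xi_n})\phi_n)]\|_X + (\text{projection part})$; the continuity of $\mathcal I^*$ reduces everything to showing
\[
\|f'_{\epsilon_n}(PV_{\delta_n,\xi_n})\psi_n\|_{q+1}+\|g'_{\epsilon_n}(PU_{\delta_n,\xi_n})\phi_n\|_{p+1}\to 0,
\]
together with the vanishing of the $Y_{\delta_n,\xi_n}$-component $(\omega_n,\zeta_n)$ (which follows by testing against the $P\Phi^i$, $P\Psi^i$ and using their uniform boundedness in $X$ plus the local strong convergence). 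The weight $f'_{\epsilon_n}(PV_{\delta_n,\xi_n})=(q-\epsilon_n)(PV_{\delta_n,\xi_n})^{q-1-\epsilon_n}$ is concentrated in a $\delta_n$-neighbourhood of $\xi_n$: after rescaling, the mass of the product is controlled by $\|qV^{q-1}\hat\psi_n\|_{L^{q+1}}$ on fixed balls (which $\to 0$ by local strong convergence to $\psi_\infty=0$) plus a tail contribution that is small by the integrability $V^{q-1}\in L^{(q+1)/?}$ with the right exponent and the boundedness of $\hat\psi_n$ — this is exactly where the structural assumptions $p\ge q>1$ and $q\ge 4/(N-2)$ enter, guaranteeing the relevant Hölder exponents are admissible. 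This last tail estimate, i.e.\ showing the ``mass at infinity'' of the rescaled test term vanishes uniformly, is the main technical obstacle; the rest is by now routine and the details, being entirely parallel to \cite{kimpi,r,rewe}, can be omitted.
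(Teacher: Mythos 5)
The paper gives no proof of this proposition, stating only that it follows by ``quite standard arguments'' and deferring to \cite{kimpi,r,rewe}; your contradiction/blow-up/non-degeneracy scheme is precisely the argument those references use, so the route you take is the expected one and the overall structure is sound.

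There is, however, one genuine gap in the proposal as written. The non-degeneracy theorem of \cite{fkp} is a full-space statement: the kernel of the linearized critical system on $\R^N$ is $(N+1)$-dimensional (dilation plus $N$ translations). The orthogonality conditions defining $Z_{\delta,\xi}$ supply only $N$ constraints (indexed $i=0,\dots,N-1$: dilation plus the $N-1$ tangential directions). You assert that these conditions ``force $(\phi_\infty,\psi_\infty)=0$'', but $N$ constraints cannot by themselves eliminate an $(N+1)$-dimensional kernel. The missing ingredient is the boundary structure you only gesture at: $(\phi_\infty,\psi_\infty)$ is obtained as a solution on the half-space $\R^N_+$ with vanishing Neumann data on $\{x_N=0\}$, and after even reflection across $\{x_N=0\}$ it is even in $x_N$; the normal-translation kernel element $(\partial_{x_N}U,\partial_{x_N}V)$ is odd in $x_N$ and therefore automatically excluded. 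Only then does the kernel reduce to the $N$ modes that the orthogonality conditions do kill. Without drawing this consequence explicitly, the ``hence $(\phi_\infty,\psi_\infty)=0$'' step does not close. A second, much smaller point: the definition of $\|\cdot\|_X$ dictates that the natural norms for $\|\mathcal I^*(\cdot,\cdot)\|_X$ are $L^{(q+1)/q}\times L^{(p+1)/p}$ of the source terms, not $L^{q+1}\times L^{p+1}$ as you wrote. On a bounded domain the latter controls the former (and matches the continuity bound the paper states for $\mathcal I^*$), so nothing breaks, but it is not the sharp choice and obscures the H\"older bookkeeping where the hypotheses on $p,q$ and the critical hyperbola actually enter.
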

The key estimate is the size of the remainder term 
\begin{proposition}\label{stima-errore}
For any $\eta>0$ there exists $\epsilon_0>0,$  $\sigma>0$ and   $C>0$ such that for any $\epsilon\in(0,\epsilon_0)$,  $d\in[\eta,1/\eta]$ and $\xi\in\partial\Omega$
$$\|\mathscr R\|_X\le C\epsilon^{\frac12+\sigma}.$$
\end{proposition}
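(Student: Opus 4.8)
The plan is to reduce $\|\mathscr R\|_X$ to $L^{\frac{q+1}{q}}$- and $L^{\frac{p+1}{p}}$-estimates on the right‑hand sides and then split the resulting error into a \emph{change‑of‑exponent} part and a \emph{change‑of‑projection} part. First, since $\Pi^\perp_{\delta,\xi}$ is bounded uniformly in the parameters and, by the very definition of the norm of $X$, $\|\mathcal I^*(h,g)\|_X=\|h\|_{\frac{q+1}{q}}+\|g\|_{\frac{p+1}{p}}$, it suffices to estimate $\|f_\epsilon(PV_{\delta,\xi})-f_0(V_{\delta,\xi})\|_{\frac{q+1}{q}}$ and $\|g_\epsilon(PU_{\delta,\xi})-g_0(U_{\delta,\xi})\|_{\frac{p+1}{p}}$. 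The two are entirely analogous (exchange $p\leftrightarrow q$ and $U\leftrightarrow V$), so I would only treat the first. Since $PV_{\delta,\xi}>0$ in $\Omega$ by the maximum principle, $f_\epsilon(PV_{\delta,\xi})=(PV_{\delta,\xi})^{q-\epsilon}$, and I would write
\[
f_\epsilon(PV_{\delta,\xi})-f_0(V_{\delta,\xi})=\big[(PV_{\delta,\xi})^{q-\epsilon}-(PV_{\delta,\xi})^{q}\big]+\big[(PV_{\delta,\xi})^{q}-(V_{\delta,\xi})^{q}\big]=:A+B .
\]

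For $B$ I would use $|t^{q}-s^{q}|\le q\,(t^{q-1}+s^{q-1})\,|t-s|$ for $t,s\ge0$ (legitimate since $q>1$), together with the expansion of $PV_{\delta,\xi}-V_{\delta,\xi}$ established in Appendix~\ref{app} (Proposition~\ref{prop:stima phie}): its leading term is of order $\delta^{\gamma}$, with $\gamma$ as in \eqref{gamma}. A change of variables $y=(x-\xi)/\delta$ together with the decay of the bubble shows that $\|(V_{\delta,\xi})^{q-1}\|_{\frac{q+1}{q}}$ stays bounded as $\delta\to0$, so that $\|B\|_{\frac{q+1}{q}}\le C\delta^{\gamma}$. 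Recalling $\delta=d\epsilon$ with $d\in[\eta,1/\eta]$, this is $\le C\epsilon^{\gamma}$; since the assumptions of Theorem~\ref{main} imply $\gamma\ge1$ (the restriction $q\ge\frac{4}{N-2}$ being precisely $\gamma\ge1$ when $N=4,5$; cf.\ the remarks after Theorem~\ref{main}), we get $\|B\|_{\frac{q+1}{q}}\le C\epsilon^{\frac12+\sigma}$ for any $\sigma\le\gamma-\frac12$.

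For $A$ I would use the elementary inequalities: for $0\le t\le1$, $|t^{q-\epsilon}-t^{q}|=t^{q-\epsilon}(1-t^{\epsilon})\le\epsilon\,|\ln t|\,t^{q-\epsilon}\le C_{q}\,\epsilon$, the last bound because $s\mapsto s\,e^{-(q-\epsilon)s}$ is bounded uniformly in small $\epsilon$; for $t\ge1$, $|t^{q-\epsilon}-t^{q}|=t^{q-\epsilon}(t^{\epsilon}-1)\le\epsilon\,t^{q}\ln t$. Applying these with $t=PV_{\delta,\xi}$, and using that $\{PV_{\delta,\xi}\ge1\}$ lies in a fixed neighbourhood of $\xi$ on which $\ln PV_{\delta,\xi}\le C|\ln\delta|$, I would obtain $|A|\le C\epsilon$ on the first set and $|A|\le C\epsilon|\ln\delta|\big((V_{\delta,\xi})^{q}+|PV_{\delta,\xi}-V_{\delta,\xi}|^{q}\big)$ on the second. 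The first contributes $O(\epsilon)$; on the second, $\|(V_{\delta,\xi})^{q}\|_{\frac{q+1}{q}}=\|V_{\delta,\xi}\|_{q+1}^{q}$, and because $(p,q)$ lies on the critical hyperbola \eqref{CH}, a change of variables shows that $\int_\Omega (V_{\delta,\xi})^{q+1}$ diverges only \emph{logarithmically}, so $\|V_{\delta,\xi}\|_{q+1}\le C|\ln\delta|^{\frac{1}{q+1}}$ and thus $\|A\|_{\frac{q+1}{q}}\le C\epsilon\,|\ln\delta|^{\frac{2q+1}{q+1}}$. Since $\delta=d\epsilon$, this is $\le C\epsilon\,|\ln\epsilon|^{\frac{2q+1}{q+1}}\le C\epsilon^{\frac12+\sigma}$ for every $\sigma<\frac12$.

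Choosing $\sigma=\tfrac14$ (admissible for both $A$ and $B$, as $\gamma\ge1$), adding the two pieces and repeating the argument for $\|g_\epsilon(PU_{\delta,\xi})-g_0(U_{\delta,\xi})\|_{\frac{p+1}{p}}$ gives $\|\mathscr R\|_X\le C\epsilon^{\frac12+\sigma}$; uniformity in $d\in[\eta,1/\eta]$ and $\xi\in\partial\Omega$ is automatic, the $d$‑dependence entering only through $\delta=d\epsilon$ with $d$ bounded away from $0$ and $\infty$, and the $\xi$‑dependence only through geometric quantities that vary continuously on the compact $\partial\Omega$. I expect the term $A$ to be the delicate point: at first sight $|t^{q-\epsilon}-t^{q}|$ is ``$O(\epsilon)$ times a bounded quantity'', but the bubble integral $\int_\Omega (V_{\delta,\xi})^{q+1}$ sits exactly at the borderline of integrability on the critical hyperbola, so it diverges logarithmically and one must carry powers of $|\ln\delta|$ and factors $\delta^{O(\epsilon)}$ through the estimates — it is precisely the calibration $\delta=d\epsilon$, which makes $\delta^{\epsilon}\to1$ and $|\ln\delta|\sim|\ln\epsilon|$, that renders this harmless; the bookkeeping then follows \cite{kimpi,r,rewe} up to minor changes. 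Alternatively one may regard the estimate for $B$, together with the expansion of Appendix~\ref{app}, as the heart of the matter, since it is what forces the restriction $q\ge\frac{4}{N-2}$.
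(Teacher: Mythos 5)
Your decomposition of the remainder is exactly the one the paper uses: reduce $\|\mathscr R\|_X$ to $L^{\frac{p+1}{p}}$- and $L^{\frac{q+1}{q}}$-bounds and split each into a change-of-exponent piece ($A$) and a change-of-projection piece ($B$). The treatment of $A$ by cutting at $\{PV_{\delta,\xi}\ge 1\}$ and absorbing $\ln PV_{\delta,\xi}$ into $|\ln\delta|$ is a workable (and somewhat more elementary) variant of the paper's Taylor-plus-power-counting argument, even though the intermediate claim that $\int_\Omega V_{\delta,\xi}^{q+1}$ ``diverges only logarithmically'' is false (the bubble decays like $r^{-(N-2)}$ with $(N-2)(q+1)>N$, so this integral is in fact $O(\delta^{N(p-q)/(p+1)})$ and \emph{bounded}); that error only makes your bound pessimistic, not wrong.

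The genuine gap is in the estimate of $B$. You write that the leading term of $PV_{\delta,\xi}-V_{\delta,\xi}$ ``is of order $\delta^{\gamma}$'' and deduce $\|B\|_{\frac{q+1}{q}}\le C\delta^{\gamma}$; neither step follows from what you cite. Proposition~\ref{prop:stima phie} (Corollary~\ref{cor:stima phie}) gives $PV_{\delta,\xi}-V_{\delta,\xi}=\delta^{-\frac{N}{q+1}+1}\psi_0\bigl(\frac{x-\xi}{\delta}\bigr)+s_\delta$ with $|\psi_0(y)|\lesssim(1+|y|)^{-(N-3)}$; the prefactor $\delta^{-\frac{N}{q+1}+1}$ is typically a \emph{negative} power of $\delta$ (e.g.\ $\delta^{-1/3}$ for $N=4$, $q=2$), so the leading term is not $O(\delta^\gamma)$ in any pointwise sense, and the cancellation of the singularity happens only after combining with the $V_{\delta,\xi}^{q-1}$ weight in the $L^{\frac{q+1}{q}}$-norm. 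Doing that computation — as the paper does — yields $\|B\|_{\frac{q+1}{q}}\lesssim\delta^{\min\{Nq/(p+1),\,1\}}(\ln\delta)^{s_2}$ on the $V$-side and $\|(P\ud)^{p}-\ud^{p}\|_{\frac{p+1}{p}}\lesssim\delta^{\min\{(\gamma+1-\frac{N}{p+1})p,\,1\}}(\ln\delta)^{s_1}$ on the $U$-side; the conclusion then hinges on checking $\frac{Nq}{p+1}>\frac12$ and $\frac{Np}{q+1}>\frac12$, which is the paper's condition \eqref{condizione e} $p\ge q>\frac{5+\sqrt{8N+9}}{4(N-2)}$, verified under the hypotheses of Theorem~\ref{main}. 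Your asserted rate $\delta^{\gamma}$ is moreover simply false as an upper bound whenever $\gamma>1$ (e.g.\ $N=5$, $q=2$ gives $\gamma=2$ but $\|B\|_{\frac{q+1}{q}}\sim\delta$), so the argument ``$\gamma\ge1$ therefore done'' does not close. Replacing the hand-waved $B$-bound by the explicit power-counting in the proof of Proposition~\ref{stima-errore}, and noting the condition one actually needs is \eqref{condizione e} rather than $\gamma\ge1$, would repair the proposal.
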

\begin{proof}
We observe that   
\[ \norm{\mathscr R}_X \le C \norm{(P\ud)^{p-\epsilon } - \ud^p}_{\frac{p+1}{p}} +\norm{(P \vd)^{q-\epsilon } - \vd^q}_{\frac{q+1}{q}}. \]
Let us estimate  the first term. One has
\begin{equation}\label{stima errore 1} \norm{(P\ud)^{p-\epsilon } - \ud^p}_{\frac{p+1}{p}}  \le \norm{(P\ud)^{p-\epsilon } - (P\ud)^{p}}_{\frac{p+1}{p}} + \norm{(P\ud)^{p} - \ud^p}_{\frac{p+1}{p}}.  \end{equation}
Notice that for some $\theta\in(0,1) $ 
\begin{multline}\label{stima log} \int_\Omega \left( (P\ud)^{p-\epsilon } - (P\ud)^p \right)^{\frac{p+1}{p}} \le \epsilon ^{\frac{p+1}{p}} \int_\Omega |P\ud|^{p+1} (\ln P\ud)^{\frac{p+1}{p}} \\+ \epsilon ^{2\frac{p+1}{p}} \int_\Omega |P\ud|^{(p-\theta\epsilon)\frac{p+1}{p}} (\ln P\ud)^{2 \frac{p+1}{p}}.  \end{multline}
Moreover, for any $t > p+1$, by Corollary \ref{cor:stima phie} and \eqref{impo1}, 
\begin{align*} \int_\Omega |P\ud|^{p+1} (\ln P\ud)^{\frac{p+1}{p}} &\le C\, \int_\Omega |\ud|^{p+1} (\ln \ud)^{\frac{p+1}{p}}  \\
& \le C\, \int_\Omega \ud^{t }= 
\begin{cases}
C \delta^{-\frac{Nt}{p+1} + N} & \text{ if } N < (\gamma+1)t \\
C \delta^{-\frac{Nt}{p+1} + N} \ln \delta & \text{ if } N = (\gamma+1)t\\
C \delta^{-\frac{Nt}{p+1} + (\gamma+1)t} & \text{ if } N > (\gamma+1)t. 
\end{cases}
  \end{align*}
where $\gamma$ is defined in \eqref{gamma}. 
 
Similarly, we can estimate the second term in \eqref{stima log}.
As a consequence, 
\[ \int_\Omega \left( (P\ud)^{p-\epsilon } - (P\ud)^p \right)^{\frac{p+1}{p}} \le \begin{cases}
C\,  \delta^{\frac{p+1}{p}} \delta^{-\frac{Nt}{p+1} + N} & \text{ if } N < (\gamma+1)t \\
C\,  \delta^{\frac{p+1}{p}} \delta^{-\frac{Nt}{p+1} + N} \ln \delta & \text{ if } N = (\gamma+1)t\\
C\,  \delta^{\frac{p+1}{p}} \delta^{-\frac{Nt}{p+1} + (\gamma+1)t} & \text{ if } N > (\gamma+1)t. 
\end{cases}
\]
We finally conclude that for any $\beta \in (0, \frac 12)$, we can take $t >p+1$ small enough such that 
\begin{equation}\label{stima errore 2} \norm{(P\ud)^{p-\epsilon } - (P\ud)^{p}}_{\frac{p+1}{p}} \le C\, \delta^{1- \frac{Nt p}{(p+1)^2} + \frac{Np}{p+1}} \le C \delta^{\frac 12 + \beta}. \end{equation}

To estimate the second term in \eqref{stima errore 1}, we use the expansion in Corollary \ref{cor:stima phie}, to get
\[  \norm{(P\ud)^{p} - \ud^{p}}_{\frac{p+1}{p}} \le \norm{\ud^{p-1} \delta^{-\frac{N}{p+1} +1} \varphi_0(x/\delta)}_{\frac{p+1}{p}} + \norm{ \ud^{p-1} \zeta_\delta}_{\frac{p+1}{p}}.
\]
Now, 
\[ \delta^{-\frac{N}{p} +\frac{p+1}{p}} \int_\Omega \left( \ud^{p-1} \varphi_0((x-\xi)/\delta) \right)^{\frac{p+1}{p}} \le 
\begin{cases}
C\,  \delta^{\frac{p+1}{p}-N+(\gamma+1)(p-1)\frac{p+1}{p} + \gamma\frac{p+1}{p}} & \text{ if } p \gamma + p -1 <\frac{Np}{p+1} \\
C\, \delta^{\frac{p+1}{p}} \ln(\delta) & \text{ if } p \gamma + p -1  = \frac{Np}{p+1} \\
C\, \delta^{\frac{p+1}{p}} & \text{ if } p \gamma + p -1  >\frac{Np}{p+1}
\end{cases} 
\]
thus
\[ \norm{\ud^{p-1} \delta^{-\frac{N}{p+1} +1} \varphi_0((x-\xi)/\delta)}_{\frac{p+1}{p}}  %\le C\, \delta^{-\frac{N}{p+1} + 1- \frac{N}{p+1}(p-1) + (\gamma+1)(p-1) + \gamma} = 
\le 
\begin{cases}
C\, \delta^{\left(\gamma+1 -\frac{N}{p+1}\right) p} & \text{ if } p \gamma + p -1  <\frac{Np}{p+1} \\
C\, \delta (\ln(\delta))^\frac{p}{p+1} & \text{ if } p \gamma + p -1  = \frac{Np}{p+1} \\
C\, \delta & \text{ if }p \gamma + p -1  >\frac{Np}{p+1}. 
\end{cases} 
\]
Similarly,
\[ \norm{ \ud^{p-1} \zeta_\delta}_{\frac{p+1}{p}} \le 
\begin{cases}
C\, \delta^{\left(\gamma+1 -\frac{N}{p+1}\right) p}& \text{ if } p \gamma + p -1  <\frac{Np}{p+1} \\ 
C\, \delta (\ln(\delta))^\frac{p}{p+1} & \text{ if } p \gamma + p -1  = \frac{Np}{p+1} \\
C\, \delta & \text{ if }p \gamma + p -1  >\frac{Np}{p+1}. 
\end{cases} \]
Thus 
%\[  \left(\gamma+1 -\frac{N}{p+1}\right) p > p > 1, \]
one has 
\[ \norm{(P\ud)^{p} - \ud^{p}}_{\frac{p+1}{p}}\le 
\begin{cases}
C\, \delta^{\left(\gamma+1 -\frac{N}{p+1}\right) p}& \text{ if } \left(\gamma+1 -\frac{N}{p+1}\right) p < 1  \\ 
C\, \delta (\ln(\delta))^\frac{p}{p+1} & \text{ if } \left(\gamma+1 -\frac{N}{p+1}\right) p = 1 \\
C\, \delta & \text{ if } \left(\gamma+1 -\frac{N}{p+1}\right) p > 1. 
\end{cases}
 \] 
This, combined with \eqref{stima errore 2}, and since the terms with $\vd$ can be treated similarly, gives
\[ \norm{\mathscr R}_X \le C\,( \delta^{\min \left \{ \left(\gamma+1 -\frac{N}{p+1}\right) p, 1\right\}} (\ln(\delta))^{s_1} + \delta^{\min \left \{ \frac{Nq}{p+1}, 1\right\}} (\ln(\delta))^{s_2} + \delta^{\frac 12 + \beta}), \]
where $s_1=\frac{p}{p+1}$ if $\left(\gamma+1 -\frac{N}{p+1}\right) p =1$, $s_1=0$ otherwise; whereas $s_2=\frac{q}{q+1}$ if $\frac{Nq}{p+1}=1$, and $s_2=0$ otherwise.
The conclusion follows observing that
$$ \frac{pqN}{p+1} > \frac{qN}{p+1} > \frac 12, \qquad \frac{pN}{q+1} >\frac 12, $$
which are verified if
\begin{equation}\label{condizione e} p \ge q > \frac{5+ \sqrt{8N+9}}{4(N-2)}. \end{equation}
Notice that this quantity is $\le 1$ if $N \ge 5$, and $< \frac{4}{N-2}$ if $N=4$, thus \eqref{condizione e} is satisfied under the hypotheses of Theorem \ref{main}. 
\end{proof}
Finally, using a classical contraction mapping argument we  solve \eqref{sys-rere-1}.
\begin{proposition}\label{stima E}
For any $\eta>0$ there exists $\epsilon_0>0,$  $\sigma>0$ and   $C>0$ such that for any $\epsilon\in(0,\epsilon_0)$,  $d\in[\eta,1/\eta]$ and $\xi\in\partial\Omega$ there exists a unique $(\phi,\psi)=(\phi(d,\xi,\epsilon),\psi(d,\xi,\epsilon))\in Z_{\delta,\xi}$ which solves \eqref{sys-rere-1} and
satisfies
$$\|(\phi,\psi)\|_X\le C\epsilon^{\frac12+\sigma}.$$
Moreover, $(d,\xi)\to (\phi(d,\xi,\epsilon),\psi(d,\xi,\epsilon))$ is a $C^1-$map.
\end{proposition}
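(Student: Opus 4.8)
The plan is to solve \eqref{sys-rere-1} via a fixed-point argument for the map
\[ \mathscr T(\phi,\psi) := \mathscr L^{-1}\left( \mathscr N(\phi,\psi) + \mathscr R \right), \]
which is well defined on $Z_{\delta,\xi}$ because $\mathscr L$ is invertible with bounded inverse by Proposition~3 (the bound $\|\mathscr L^{-1}\| \le 1/C$ being uniform in $\epsilon \in (0,\epsilon_0)$, $d \in [\eta,1/\eta]$, $\xi \in \partial\Omega$), and because $\mathscr N$ and $\mathscr R$ take values in $Z_{\delta,\xi}$ by construction. A solution of \eqref{sys-rere-1} is precisely a fixed point of $\mathscr T$. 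I would run the contraction argument on the closed ball
\[ \mathscr B_\rho := \left\{ (\phi,\psi) \in Z_{\delta,\xi} : \|(\phi,\psi)\|_X \le \rho \right\}, \qquad \rho := M \epsilon^{\frac12 + \sigma}, \]
for a suitably large constant $M$ and with $\sigma>0$ coming from Proposition~\ref{stima-errore} (possibly shrunk). Since $\delta = d\epsilon$, the estimate of Proposition~\ref{stima-errore} reads $\|\mathscr R\|_X \le C \epsilon^{\frac12+\sigma}$ uniformly in the parameters.

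The two estimates needed are the standard ones for Lyapunov--Schmidt schemes. First, $\mathscr T$ maps $\mathscr B_\rho$ into itself: by the triangle inequality and the bound on $\mathscr L^{-1}$,
\[ \|\mathscr T(\phi,\psi)\|_X \le \tfrac1C\left( \|\mathscr N(\phi,\psi)\|_X + \|\mathscr R\|_X \right). \]
Since $f_\epsilon, g_\epsilon$ are power nonlinearities, $\mathscr N(\phi,\psi)$ is a genuine superlinear remainder: using the continuity of $\mathcal I^*$ and the elementary inequality $|a^r - b^r - r b^{r-1}(a-b)| \le C\,(|a-b|^{\min\{r,2\}} + |a-b|^r)$ for $r>1$ (with the Sobolev embeddings $W^{2,(p+1)/p} \hookrightarrow L^{p+1}$, etc., to control the resulting $L^{(p+1)/p}$ and $L^{(q+1)/q}$ norms), one gets
\[ \|\mathscr N(\phi,\psi)\|_X \le C\left( \|(\phi,\psi)\|_X^{\min\{p,q,2\}} + \|(\phi,\psi)\|_X^{\max\{p,q\}} \right), \]
which on $\mathscr B_\rho$ is $o(\rho)$ as $\epsilon \to 0$ (here $p,q>1$ is exactly what is needed so that $\min\{p,q,2\}>1$). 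Choosing $M$ large relative to $C$ and $\epsilon_0$ small, $\tfrac1C(\|\mathscr N\|_X + \|\mathscr R\|_X) \le \rho$. Second, $\mathscr T$ is a contraction on $\mathscr B_\rho$: the difference $\mathscr N(\phi_1,\psi_1) - \mathscr N(\phi_2,\psi_2)$ is estimated by the mean value theorem applied to $f_\epsilon', g_\epsilon'$, giving a Lipschitz constant of order $\rho^{\min\{p,q,2\}-1} + \rho^{\max\{p,q\}-1} = o(1)$, so $\|\mathscr T(\phi_1,\psi_1) - \mathscr T(\phi_2,\psi_2)\|_X \le \tfrac12 \|(\phi_1,\psi_1)-(\phi_2,\psi_2)\|_X$ for $\epsilon$ small. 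The Banach fixed-point theorem then yields a unique $(\phi,\psi) = (\phi(d,\xi,\epsilon),\psi(d,\xi,\epsilon)) \in \mathscr B_\rho$ solving \eqref{sys-rere-1}, with the asserted bound $\|(\phi,\psi)\|_X \le C\epsilon^{\frac12+\sigma}$.

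For the $C^1$-dependence on $(d,\xi)$, I would write \eqref{sys-rere-1} as $\mathscr G(d,\xi,\epsilon,\phi,\psi) = 0$ with $\mathscr G := (\phi,\psi) - \mathscr T(\phi,\psi)$ and apply the implicit function theorem: the partial derivative of $\mathscr G$ in $(\phi,\psi)$ at the solution is $\mathrm{Id} - \mathscr L^{-1} D_{(\phi,\psi)}\mathscr N$, which is invertible since $\|D_{(\phi,\psi)}\mathscr N\| = o(1)$; the maps $\delta = d\epsilon$, $\xi \mapsto (PU_{\delta,\xi}, PV_{\delta,\xi})$, and the projections $\Pi_{\delta,\xi}, \Pi^\perp_{\delta,\xi}$ depend smoothly on $(d,\xi)$ (this uses smoothness of $\partial\Omega$ and the regularity theory for the Neumann problem already invoked for $\mathcal I^*$), so $\mathscr G$ is $C^1$ jointly and the conclusion follows. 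The main technical obstacle is the nonlinear estimate on $\mathscr N$ when $\min\{p,q\} < 2$: the second-order Taylor remainder of $t \mapsto t^{\min\{p,q\}}$ is only Hölder, not Lipschitz, so one must be careful to split the integrals according to whether $|\phi|$ (resp. $|\psi|$) is small or large compared to $PU_{\delta,\xi}$ (resp. $PV_{\delta,\xi}$) and to check that the Sobolev exponents line up — but since $p,q>1$ the exponent $\min\{p,q,2\}$ is strictly greater than $1$, so the superlinearity that drives the contraction is preserved. These computations are routine and parallel to \cite{kimpi,r,rewe}, so I would only indicate the splitting and refer to those references for the details.
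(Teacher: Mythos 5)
Your proposal is correct and follows precisely the \enquote{classical contraction mapping argument} the paper invokes for this step and delegates to \cite{kimpi,r,rewe}: contraction of $\mathscr T=\mathscr L^{-1}(\mathscr N+\mathscr R)$ on a ball of radius $M\epsilon^{1/2+\sigma}$ in $Z_{\delta,\xi}$, combining the uniform invertibility of $\mathscr L$ with the error estimate of Proposition~\ref{stima-errore}, followed by the implicit function theorem for the $C^1$-dependence on $(d,\xi)$.

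Two minor points worth tightening. First, on the critical hyperbola the Sobolev embeddings that make the estimate on $\mathscr N$ close are the \emph{crossed} ones,
\[
W^{2,\frac{p+1}{p}}(\Omega)\hookrightarrow L^{q+1}(\Omega),\qquad W^{2,\frac{q+1}{q}}(\Omega)\hookrightarrow L^{p+1}(\Omega),
\]
not $W^{2,\frac{p+1}{p}}\hookrightarrow L^{p+1}$ as you wrote; it is precisely $\|\psi\|_{L^{q+1}}\lesssim\|(\phi,\psi)\|_X$ and $\|\phi\|_{L^{p+1}}\lesssim\|(\phi,\psi)\|_X$ that let you pass from the pointwise Taylor-remainder bound on $f_\epsilon,g_\epsilon$ to the $L^{\frac{q+1}{q}}\times L^{\frac{p+1}{p}}$ bound needed for $\mathcal I^*$. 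Second, as is standard, the Banach fixed-point theorem gives uniqueness only in the ball $\mathscr B_\rho$; that local uniqueness is the intended reading of the proposition and is enough for the remainder of the Lyapunov--Schmidt scheme.
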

\subsection{Solving (\ref{sys-rere-2})}
We introduce the functional 
\[ J_\epsilon(u, v):=\frac12 \int_\Omega \nabla u \nabla v +\frac \mu 2 \int_\Omega uv - \frac{1}{p+1-\varepsilon} \int_\Omega (u^+)^{p+1-\varepsilon} - \frac{1}{q+1-\varepsilon} \int_\Omega (v^+)^{q+1-\varepsilon}, \] 
whose critical points are solutions to the system \eqref{system}.
We also define the reduced functional
\[ \tilde J_\epsilon : (0,+\infty)\times\partial\Omega\to \mathbb R, \quad \tilde J_\epsilon(d, \xi):=J_\epsilon(P \ud+\phi, P \vd+\psi) \]
where $\delta=d\xi$ as in \eqref{ans} and $(\phi, \psi)\in Z_{\delta,\xi}$ has been already found in   Proposition \ref{stima E}.
As it is usual, solving \eqref{sys-rere-2} turns out to be equivalent to finding a critical point of the reduced energy, as stated in  following result.
\begin{proposition}\label{reduced funct}
There exists $\epsilon_0>0$    such that for any $\epsilon\in(0,\epsilon_0)$
if $(d, \xi)$ is a critical point for $\tilde J_\epsilon$ then $(P \ud+\phi, P \vd+\psi)$ is a critical point of $J_\epsilon$ and in particular a solution to \eqref{sys-rere-2}.
\\  Moreover
	\begin{equation}\label{expansion energy} \tilde J_\epsilon(d, \xi)= \mathfrak c_1 - \mathfrak c_2 \eps \ln \eps +\mathfrak c_3\epsilon+\underbrace{\left(-\mathfrak c_4 H(\xi) d  -\mathfrak c_2 \ln d  \right)}_{=\Theta(d,\xi)}\epsilon + o(\eps), \end{equation}
 $C^1-$uniformly with respect to $\xi\in\partial\Omega$ and $d$ in compact sets of $(0,\infty)$.
 Here the $\mathfrak c_i$'s are constants and in particular $\mathfrak c_2$ and  $\mathfrak c_4$ are positive.
 \end{proposition}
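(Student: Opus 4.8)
## Proof proposal for Proposition \ref{reduced funct}

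The plan is to follow the standard two-part structure of a Lyapunov--Schmidt argument: first establish the variational reduction principle, then carry out the energy expansion.

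\textbf{The reduction principle.} First I would show that a critical point $(d,\xi)$ of $\tilde J_\epsilon$ yields a genuine critical point of $J_\epsilon$. Since $(\phi,\psi)$ solves \eqref{sys-rere-1}, the quantity
$$\left(PU_{\delta, \xi}+\phi,P V_{\delta, \xi}+\psi\right)-\mathcal I^*\left[\left(f_\epsilon\left(P V_{\delta, \xi}+\psi\right),g_\epsilon\left(PU_{\delta, \xi}+\phi\right)\right)\right]$$
lies in $Y_{\delta,\xi}$, i.e. $J_\epsilon'(PU_{\delta, \xi}+\phi,P V_{\delta, \xi}+\psi)$ is a linear combination of the functionals dual to the $\left(P\Phi^i_{\delta,\xi},P\Psi^i_{\delta,\xi}\right)$. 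Differentiating $\tilde J_\epsilon(d,\xi)=J_\epsilon(PU_{\delta, \xi}+\phi,P V_{\delta, \xi}+\psi)$ with respect to $d$ and to the tangential components of $\xi$, and using that $\partial_d(\phi,\psi)$ and $\partial_{\tau_i}(\phi,\psi)$ stay (up to lower order terms coming from the $\delta$- and $\xi$-dependence of the space $Z_{\delta,\xi}$) in directions against which $J_\epsilon'$ already vanishes, one finds that $\nabla_{d,\xi}\tilde J_\epsilon(d,\xi)=0$ forces all the Lagrange multipliers to vanish, provided the corresponding Gram-type matrix is invertible. That invertibility holds for $\epsilon$ small because the leading order of the matrix is, after rescaling, the Gram matrix of the nondegenerate kernel elements of \eqref{crisys} established in \cite{fkp}; this is exactly the argument of \cite{kimpi,r,rewe} and I would invoke it verbatim.

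\textbf{The energy expansion.} Writing $J_\epsilon(PU_{\delta, \xi}+\phi,P V_{\delta, \xi}+\psi)=J_\epsilon(PU_{\delta, \xi},P V_{\delta, \xi})+\text{(corrections)}$, the corrections are controlled by $\|(\phi,\psi)\|_X$, hence by Proposition \ref{stima E} are $O(\epsilon^{1+2\sigma})=o(\epsilon)$ together with their derivatives in $(d,\xi)$ (the $C^1$ part uses that $(d,\xi)\mapsto(\phi,\psi)$ is $C^1$, again Proposition \ref{stima E}). So it suffices to expand $J_\epsilon(PU_{\delta, \xi},P V_{\delta, \xi})$. The quadratic part $\frac12\int\nabla PU_{\delta, \xi}\nabla PV_{\delta, \xi}+\frac\mu2\int PU_{\delta, \xi}PV_{\delta, \xi}=\frac12\int \Delta PU_{\delta, \xi}\,PV_{\delta, \xi}\cdot(-1)+\dots$ is rewritten using the defining equations \eqref{proj} as $\frac12\int U_{\delta, \xi}^p\, PV_{\delta, \xi}$, and similarly the nonlinear terms are expanded by plugging in the projection expansion $PU_{\delta, \xi}=U_{\delta, \xi}-\delta^{-N/(p+1)+1}\varphi_0((x-\xi)/\delta)+\dots$ from Corollary \ref{cor:stima phie}/Appendix \ref{app}, together with a Taylor expansion of $t\mapsto t^{p+1-\epsilon}$ in $\epsilon$. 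The $\epsilon$-Taylor expansion produces the $\mathfrak c_2\,\epsilon\ln\epsilon$ and $\mathfrak c_3\,\epsilon$ terms (from $\int U^{p+1}\ln U$ type integrals and the $\frac1{p+1-\epsilon}$ prefactor, after substituting $\delta=d\epsilon$ so that $\ln\delta=\ln d+\ln\epsilon$), the boundary-curvature term $-\mathfrak c_4 H(\xi)d\,\epsilon$ comes from the first nontrivial term in the projection expansion, whose coefficient is a positive multiple of $H(\xi)\delta=H(\xi)d\epsilon$ (this is where $N\ge4$ and $\gamma\ge1$ enter, ensuring this term dominates the error), and $-\mathfrak c_2\ln d\,\epsilon$ is the $\ln\delta=\ln d+\ln\epsilon$ bookkeeping again. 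Collecting constants gives \eqref{expansion energy}; the positivity of $\mathfrak c_2$ is immediate ($\int U^{p+1}\ln U>0$ since $U(0)=1$ makes $\ln U$ change sign but the weighted integral is positive by the explicit profile — more precisely it comes from $\frac{d}{d\epsilon}\big|_0\int U^{p+1-\epsilon}$ type computations as in \cite{rewe}), and positivity of $\mathfrak c_4$ follows from the sign of the curvature correction term computed in Appendix \ref{app}.

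\textbf{Main obstacle.} The genuinely delicate point is the energy expansion being $C^1$-uniform rather than merely $C^0$: one must differentiate all of the above in $d$ and $\xi$ and check that no term of order $\epsilon$ is lost or spuriously created. In particular the $d$-derivative of $-\mathfrak c_4 H(\xi)d\,\epsilon-\mathfrak c_2\ln d\,\epsilon$ is $\left(-\mathfrak c_4 H(\xi)-\mathfrak c_2/d\right)\epsilon$, which is $O(\epsilon)$ and not $o(\epsilon)$, so the error terms in the expansion must be shown to be $o(\epsilon)$ \emph{after} differentiation — this requires the $C^1$ bounds on $(\phi,\psi)$ from Proposition \ref{stima E} and $C^1$ control of the projection expansion in Appendix \ref{app}, and is the part where one cannot simply quote the scalar literature without the system-specific estimates. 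Once this is in hand, the critical point structure of $\Theta(d,\xi)=-\mathfrak c_4 H(\xi)d-\mathfrak c_2\ln d$ near $(d_0,\xi_0)$ with $d_0=-\mathfrak c_2/(\mathfrak c_4 H(\xi_0))>0$ (using $H(\xi_0)<0$) and $\xi_0$ a nondegenerate critical point of $H$ is stable under $o(\epsilon)$ perturbations, which in a final step (not part of this proposition) yields a critical point of $\tilde J_\epsilon$ and hence Theorem \ref{main}.
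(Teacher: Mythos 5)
Your overall structure — reduction principle via the Gram matrix, expansion of $J_\epsilon(P\ud+\phi, P\vd+\psi)$ as $J_\epsilon(P\ud,P\vd)+o(\eps)$, then Taylor in $\eps$ and use of Corollary~\ref{cor:stima phie} — matches the paper. But there is a genuine gap in the way you handle the coefficient $\mathfrak c_4$, and this is where the real work of the proposition lies.

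You rewrite the quadratic part of the energy in a single form ($\tfrac12\int U_{\delta,\xi}^p\,PV_{\delta,\xi}$) and assert that positivity of $\mathfrak c_4$ ``follows from the sign of the curvature correction term computed in Appendix~\ref{app}.'' This hides the real difficulty: the boundary-curvature corrections enter the expansion with \emph{both} signs. The quadratic part contributes $-C_1 H(\xi)\delta$ and $-C_2 H(\xi)\delta$ (from the expansions of $\int U_{\delta,\xi}^{p+1}$, $\int V_{\delta,\xi}^{q+1}$), but the projection correction contributes $+C_3 H(\xi)\delta$ and $+C_4 H(\xi)\delta$ with the opposite sign; the nonlinear terms again mix these with coefficients $\tfrac{1}{p+1}, \tfrac{1}{q+1}$. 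After combining them, the sign of the resulting coefficient is not at all obvious. The paper resolves this by a trick you do not use: it introduces a free convexity parameter $\lambda\in[0,1]$ in the representation of the quadratic form (two equivalent ways of writing $\int\nabla P\ud\nabla P\vd+\mu\int P\ud P\vd$ via the defining equations of $P\ud$ and $P\vd$), which makes $\mathfrak c_4$ take the form $C_1\bigl(\lambda-\tfrac{1}{p+1}\bigr)+C_2\bigl(1-\lambda-\tfrac{1}{q+1}\bigr)+(1-\lambda)C_3+\lambda C_4$. This is manifestly positive whenever $\tfrac1{p+1}<\lambda<\tfrac{q}{q+1}$, an interval that is nonempty since $pq>1$. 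Of course $\mathfrak c_4$ cannot really depend on $\lambda$, and the paper then proves the identity $C_1-C_2-C_3+C_4=0$ by a careful integration by parts (using the decays \eqref{impo1}--\eqref{impo2}), which confirms $\lambda$-independence. This identity plus the $\lambda$-freedom is the heart of the positivity argument, and your proposal omits it entirely; without it, you would be stuck at a particular numerical combination of $C_1,\dots,C_4$ (corresponding to one fixed choice of $\lambda$) whose positivity you have not established.

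Two smaller points. First, your explanation of why $\mathfrak c_2>0$ is wrong: $\mathfrak c_2=\tfrac{N}{2}S_{p,q}^{N/2}\bigl(\tfrac{1}{(p+1)^2}+\tfrac{1}{(q+1)^2}\bigr)$, whose positivity is trivial and has nothing to do with the sign of $\int U^{p+1}\ln U$ (that integral enters $\mathfrak c_3$, whose sign is irrelevant). The $\eps\ln\eps$ term is generated by the scaling factor $\ln\delta$ inside $\ln U_{\delta,\xi}$, not by $\int U^{p+1}\ln U$. Second, your remark about the $C^1$-uniformity of the expansion is a fair observation but the paper handles it by the standard route (smooth dependence on $(d,\xi)$ of the finite-dimensional reduction from Proposition~\ref{stima E} plus explicit $C^1$ control of the Appendix expansion), which is consistent with what you describe.
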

\begin{proof}
It is standard to prove the first part of the claim and also to show that by Proposition \ref{stima E}
$$ J_\epsilon(P \ud+\phi, P \vd+\psi)=J_\epsilon(P \ud , P \vd )+o(\epsilon),$$
so we need only to estimate the term  $J_\epsilon(P \ud , P \vd )$.
\\ We have for any $\lambda \in [0, 1]$
$$\begin{aligned} J_\epsilon(PU_{\delta, \xi} , PV_{\delta,\xi})&= \int_\Omega \nabla PU_{\delta, \xi} \nabla PV_{\delta, \xi} + \mu \int_\Omega PU_{\delta, \xi}PV_{\delta, \xi} \\
&\qquad  - \frac{1}{p+1- \eps} \int_\Omega (PU_{\delta, \xi})^{p+1- \eps} - \frac{1}{q+1-\eps} \int_\Omega (PV_{\delta, \xi})^{q+1- \eps} \\
&=  \lambda \int_\Omega U_{\delta, \xi}^p PU_{\delta, \xi} + (1-\lambda) \int_\Omega V_{\delta, \xi}^q PV_{\delta, \xi} \\
&\qquad  - \frac{1}{p+1- \eps} \int_\Omega (PU_{\delta, \xi})^{p+1- \eps} - \frac{1}{q+1- \eps} \int_\Omega (PV_{\delta, \xi})^{q+1- \eps}.
\end{aligned}$$
Let us estimate separately each one of the previous four terms. \\
We can write (see  Corollary \ref{cor:stima phie})
\[ \int_\Omega U_{\delta, \xi}^p PU_{\delta, \xi} = \int_\Omega U_{\delta, \xi}^{p +1} + \delta^{-\frac{N}{p+1} +1}\int_\Omega U_{\delta, \xi}^p\varphi_0\left( \frac{x-\xi}{\delta} \right) + \int_\Omega U_{\delta, \xi}^p \zeta_\delta. \]
First of all by \cite[Lemma 4.7]{PST}
$$\int_\Omega U_{\delta, \xi}^{p+1} = \frac12 S_{p,q}^{\frac N2} -C_1 H(\xi) \delta  + o(\delta), $$
with 
\[ C_1:= \frac 12 \int_{\R^{N-1}} |y'|^2 U^{p+1}(y', 0) \, dy'>0. \] 
Moreover, for any fixed $\rho >0$, 
\begin{align*} \delta^{-\frac{N}{p+1} +1}\int_{\Omega \cap B_\rho(0)} U_{\delta, \xi}^p \varphi_0\left( \frac{x-\xi}{\delta} \right) &= \delta^{-\frac{N}{p+1} +1} \int_{\Omega \cap B_\rho(0)} \delta^{-\frac{Np}{p+1}} U^p\left( \frac{x-\xi}{\delta} \right)\varphi_0\left( \frac{x-\xi}{\delta} \right)\\
&=\delta \int_{\R^N_+} U^p(x) \varphi_0(x) + o(\delta) \\
&= \delta \int_{\partial \R^N_+} (\partial_\nu \varphi_0) V + o(\delta) \\
&=  - \delta \int_{\partial \R^N_+} U' V \frac{\sum \rho_j x_j^2}{|x|} + o(\delta)\\ &=C_3 H(\xi) \delta + o(\delta), 
 \end{align*}
 where (see  \cite[Lemma B.8]{PSaT})
 \[ C_3=-\frac{N-1}{2} \int_{\partial \R^N_+} U' V \frac{x_1^2}{|x|} = -\frac 12 \int_{\R^{N-1}} |y'| U' V >0, \]  
and 
\[  \delta^{-\frac{N}{p+1} +1}\int_{\Omega \cap B^c_\rho(0)} U_{\delta, \xi}^p \varphi_0\left( \frac{x-\xi}{\delta} \right)  \le
 C \delta^{-\frac{N}{p+1} +1} \delta^{-\frac{Np}{p+1} + \gamma p+p + \gamma } = C\, \delta^{-N +(\gamma+1)(p+1)} =o(\delta).  \]
Furthermore, 
$$ \int_\Omega U_{\delta, \xi}^p \zeta_\delta \le C \delta^{-\frac{N}{p+1} + 2} (\ln \delta)^{\hat \sigma} \int_\Omega U_{\delta, \xi}^p\le \delta^2 (\ln \delta)^{\hat \sigma}  \int_{\Omega_\delta} U_{1, 0}^p(x) =o(\delta). 
$$
Collecting all the previous estimates
\[ \int_\Omega U_{\delta, \xi}^p PU_{\delta, \xi} = \frac12 S_{p,q}^{\frac N2} -C_1 H(\xi) \delta + C_3 H(\xi) \delta  + o(\delta). \]
and similarly
\[ \int_\Omega V_{\delta, \xi}^q PV_{\delta, \xi} = \frac12 S_{p,q}^{\frac N2} -C_2 H(\xi) \delta + C_4 H(\xi) \delta  + o(\delta), \]
where
\[ C_2:= \frac 12 \int_{\R^{N-1}} |y'|^2 V^{q+1}(y', 0) \, dy'>0\ \hbox{and} \ C_4:=-\frac 12 \int_{\R^{N-1}} |y'| V' U >0. \]
Next we estimate the terms coming from the nonlinearity. 
One has
\[ \frac{1}{p+1- \eps}= \frac{1}{p+1}+  \frac{\eps}{(p+1)^2} + o(\eps) \]
and also
\begin{align*} \int_\Omega (PU_{\delta, \xi})^{p+1- \eps}&= \int_\Omega (PU_{\delta, \xi})^{p+1} + ((PU_{\delta, \xi})^{p+1-\eps}-(PU_{\delta, \xi})^{p+1})  \\
&=  \int_\Omega (PU_{\delta, \xi})^{p+1} - \eps  \int_\Omega (PU_{\delta, \xi})^{p+1} \ln(PU_{\delta, \xi}) +  \eps^2 \int_\Omega (PU_{\delta, \xi})^{p+1} (\ln(PU_{\delta, \xi}))^2 + o(\eps^3). \end{align*}
Arguing as above, the first term can be estimated as 
 \[ \int_\Omega (PU_{\delta, \xi})^{p+1} = \frac 12 S_{p, q}^{\frac{N}{2}} - C_1 H(\xi) \delta + (p+1)C_3 H(\xi) \delta + o(\delta). \]
while the second one  can be estimated by   \cite[(B.1)  and Lemma 4.10]{PSaT} as
\begin{align*}
 \int_\Omega (PU_{\delta, \xi})^{p+1} \ln(PU_{\delta, \xi}) &=   \int_\Omega U_{\delta, \xi}^{p+1} \ln(U_{\delta, \xi}) +o(1) \\
&= \int_{\Omega_\delta} U^{p+1} \ln (U) - \frac{N}{p+1} \int_{\Omega_\delta} U^{p+1} \ln(\delta)  + o(1)\\
&=  C_5 - \frac{N}{p+1} \left( \frac 12 S_{p,q}^{\frac N2} \ln(\delta) - C_1 H(\xi)\delta  \ln(\delta) \right)+ o(1),
\end{align*}
with 
\[ C_5:= \frac 12 \int_{\R^N} U^{p+1} \ln (U). \]
Therefore, again collecting the previous estimates
 \begin{align*} \frac{1}{p+1- \eps} \int_\Omega (PU_{\delta, \xi})^{p+1-\eps}& = \frac{1}{2(p+1)} S_{p,q}^{N/2} - \frac{H(\xi)}{p+1} \left(C_1 - (p+1)C_3 \right)\delta  \\
 & \qquad -\frac{ \eps}{p+1} \left(C_5-\frac{1}{2(p+1)}S_{p,q}^{N/2}\right) +  \frac{ N}{2(p+1)^2} S_{p,q}^{N/2} \eps \ln(\delta) \\
 & \qquad+ o(\eps) + o(\delta)+ O(\delta \eps) 
 \end{align*}
and similarly
 \begin{align*}
 \frac{1}{q+1- \eps} \int_\Omega (PV_{\delta, \xi})^{q+1- \eps} & =  \frac{1}{2(q+1)} S_{p,q}^{N/2} - \frac{H(\xi)}{q+1} \left(C_2 - (q+1)C_4 \right)\delta \\
 & \hspace{0.5cm} - \frac{\eps}{q+1} \left(C_6-\frac{1}{2(q+1)}S_{p,q}^{N/2}\right) + \frac{ N}{2(q+1)^2} S_{p,q}^{N/2} \eps \ln(\delta) \\
 & \hspace{3cm} + o(\eps) + o(\delta)+ O(\delta \eps).
 \end{align*}
Finally we get
\begin{equation}\begin{aligned}
J(PU_{\delta, \xi} , PV_{\delta,\xi})& =\underbrace{ \frac{S_{p,q}^{N/2}}{N} }_{=:\mathfrak c_1} - H(\xi)\delta
\underbrace{\left[C_1 \left(\lambda- \frac{1}{p+1}\right) + C_2   \left(1-\lambda -\frac{1}{q+1}\right) +(1-\lambda)  C_3+ \lambda  C_4\right]}_{=:\mathfrak c_4}\\
& + \eps\underbrace{\left[\left(\frac{ C_5}{p+1} + \frac{ C_6}{q+1}\right) -\frac{S_{p,q}^{N/2}}2 \left(\frac{1}{(p+1)^2} 
 + \frac{1}{(q+1)^2}\right)\right] }_{=:\mathfrak c_3}\\
 &   - \eps \ln(\delta)\underbrace{\frac{N}{2} S_{p,q}^{N/2} \left(\frac{1 }{(p+1)^2} + \frac{1 }{(q+1)^2}\right) }_{=:\mathfrak c_2} + o(\eps) + o(\delta) + O(\delta\eps)\label{energia}
\end{aligned} \end{equation}
and choosing  $\delta=d\eps$ the claim follows. 
\\

We remark that the constant $\mathfrak c_4$ (which a priori depends on  $\lambda$) is strictly positive  whenever 
\[  \frac{1}{p+1} < \lambda < \frac{q}{q+1}. \]
Actually we show that it is independent of $\lambda$, because its derivative with respect to $\lambda$ is zero, i.e.
\[ +C_1 - C_2 - C_3 + C_4 = 0. \]
Indeed by a straightforward computation  
\begin{align*}
\int_{\R^{N-1}} |y'|^2 U^{p+1}(y', 0) \, dy' &= \int_{\R^{N-1}} |y'|^2 U(y', 0) (-\Delta V)(y', 0) \, dy' \\
&=\frac{1}{\omega_{N-2}} \int_0^{+\infty} r^N U(r) (-V''(r) - \frac{N-1}{r} V'(r)) \, dr \\
&= - \frac{1}{\omega_{N-2}}  \int_0^{+\infty} r^N U V'' - \frac{N-1}{\omega_{N-2}}   \int_0^{+\infty} r^{N-1} U V'
\end{align*}
and by integration by parts, and using \eqref{impo1} and \eqref{impo2} 
\[  \int_0^{+\infty} r^N U V''= \int_0^{+\infty} (r^N U)'' V = N(N-1)  \int_0^{+\infty} r^{N-2} UV + 2 N  \int_0^{+\infty} r^{N-1} U' V +  \int_0^{+\infty} r^N U''V . \]
Moreover,
\[ C_3=-\frac 12 \int_{\R^{N-1}} |y'| U' V= -\frac{1}{2 \omega_{N-2}} \int_0^{+\infty} r^{N-1} U' V \, dr, \]
and a similar expression holds for $C_4$. 
Thus
\begin{align*}
&-C_1 + C_2 + C_3 - C_4\\  &= \frac 1{2 \omega_{N-2}} \left(N(N-1)  \int_0^{+\infty} r^{N-2} UV + 2 N  \int_0^{+\infty} r^{N-1} U' V +  \int_0^{+\infty} r^N U''V  \right) \\
&   + \frac{N-1}{2 \omega_{N-2}}   \int_0^{+\infty} r^{N-1} U V' - \frac{1}{2 \omega_{N-2}}  \int_0^{+\infty} r^N U'' V - \frac{N-1}{2\omega_{N-2}}   \int_0^{+\infty} r^{N-1} U' V  \\
&  -\frac{1}{2 \omega_{N-2}} \int_0^{+\infty} r^{N-1} U' V +\frac{1}{2 \omega_{N-2}} \int_0^{+\infty} r^{N-1} U V' \\
&=  \frac 1{2 \omega_{N-2}} \left( N(N-1)  \int_0^{+\infty} r^{N-2} UV + N   \int_0^{+\infty} r^{N-1} U' V  + N  \int_0^{+\infty} r^{N-1} U V'  \right)\\ &=0,
\end{align*}
because
\[ \int_0^{+\infty} r^{N-1} U' V = - \int_0^{+\infty} (r^{N-1} V)'U = - (N-1) \int_0^{+\infty} r^{N-2} V U - \int_0^{+\infty} r^{N-1} V'U. \qedhere \]
\end{proof}

Finally, we can complete the proof of Theorem \ref{main}.
\begin{proof} Let $\xi_0$ be a non-degenerate critical point of $H$ with $H(\xi_0)<0$. Set $d_0:=-\frac{\mathfrak c_5}{\mathfrak c_4 H(\xi_0)}.$
Therefore $(d_0,\xi_0)$ is a non-degenerate critical point of $\Theta$ which is stable under $C^1-$perturbation. Thus the reduced energy has a critical point  $(d_\epsilon,\xi_\epsilon)\to  (d_0,\xi_0) $  as $\epsilon\to0.$
Collecting all the previous results, the claim follows.
\end{proof}

\begin{remark}\label{sopra}
The expansion of the reduced energy  suggests that we could find a positive solution to 
 the almost critical problem
$$ 
-\Delta u + \mu u=v^{q\pm \epsilon },\
-\Delta v+ \mu v=u^{p\pm \epsilon }\  \text{ in $\Omega$},\ 
\partial_\nu u= \partial_\nu v=0   \text{ on $\partial \Omega$},
$$
which blows-up at a critical point $\xi_0$ of the mean curvature if
either {\em  $H(\xi_0)<0$ and the exponents are $(q-\epsilon,p\pm\epsilon)$} or
{\em $H(\xi_0)>0$ and the exponents are $(q+\epsilon,p\pm\epsilon)$
}.\\
It is enough to observe that the sign of the
  $\epsilon\ln\delta$ term in \eqref{energia} is determined by the  constant $ \mathfrak c_2$ 
which (up to a positive constant) is
$$\mathfrak c_2=\left\{\begin{aligned} & \frac{1 }{(p+1)^2} + \frac{1 }{(q+1)^2} >0 \ \hbox{in the case }\ (q-\epsilon,p-\epsilon)\\
& -\frac{1 }{(p+1)^2} - \frac{1 }{(q+1)^2} <0 \ \hbox{in the case }\ (q+\epsilon,p+\epsilon)\\ 
& \frac{1 }{(p+1)^2} - \frac{1 }{(q+1)^2} <0 \ \hbox{in the case }\ (q+\epsilon,p-\epsilon)\\
& -\frac{1 }{(p+1)^2} + \frac{1 }{(q+1)^2} >0 \ \hbox{in the case }\ (q-\epsilon,p+\epsilon).\\
\end{aligned}\right.$$
\end{remark}

\appendix
\section{Expansion of the projections}\label{app}
In this Section, we prove some crucial estimates on the projections $P\ud, P\vd$, and $P\phid, P\psid$, see also \cite{guopeng} for related results in case $\mu=0$.

First of all, it is useful to remind the decay of the bubble $(U,V)$ and of its derivative:
\begin{equation}\label{impo1}
  \lim _{r \rightarrow \infty} r^{N-2} V(r)=a\ \hbox{and}\  \begin{cases}
\lim _{r \rightarrow \infty} r^{N-2} U(r)=b  & \text { if } q>\frac{N}{N-2} \\ 
\lim _{r \rightarrow \infty} \frac{r^{N-2}}{\ln r} U(r)=b  & \text { if } q=\frac{N}{N-2} \\ 
\lim _{r \rightarrow \infty} r^{q(N-2)-2} U(r)=b  & \text { if } q<\frac{N}{N-2}
\end{cases}
\end{equation}
for some positive $a$ and $b$ and also
\begin{equation}\label{impo2}
 \lim _{r \rightarrow \infty} {rV'(r)\over V(r)}=\lim _{r \rightarrow \infty}  {rU'(r)\over U(r)}=1.
\end{equation}

Next, it is useful to introduce the constant
\begin{equation}\label{gamma}
\gamma:= 
\left\{\begin{aligned} &N-3\ &\hbox{if }\ q>\frac N{N-2}\\
 & q(N-2)-3\ &\hbox{if }\ q<\frac N{N-2}\\
\end{aligned}\right.
\end{equation}
and to point out that it satisfies
\begin{equation}\label{gamma2}
0<\gamma+1-\frac N{p+1}= \left\{\begin{aligned} &\frac N{q+1}\ &\hbox{if }\ q>\frac N{N-2}\\
 & \frac {Nq}{p+1}\ &\hbox{if }\ q<\frac N{N-2}.\\
\end{aligned}\right.
\end{equation}
Even if in the  paper we assume $\gamma \ge 1$, we prefer to give the proof for the more general case $\gamma >0$, on the one hand, to make more transparent the reason why the assumption $\gamma \ge 1$ is actually needed, and on the other hand, because we believe it could be helpful to gain a better understanding of the open case $0<\gamma <1$. 
\\

Let $\rho$ be the function which locally describes the boundary, namely for a suitably small $s$ we have
\[ \Omega \cap B_s(0)=\{ (x', x_N): x_N > \rho(x')\}, \qquad \rho(x')=\sum_{j=1}^{N-1} \rho_j x_j^2 + O(|x'|^3). \]
Notice that $H(0)=\frac{2}{N-1}\sum_{j=1}^{N-1} \rho_j$. 
Define 
\[ \begin{cases}
\Delta \varphi_0=0 & \R^N_+ \\
\frac{\partial \varphi_0}{\partial x_N} (x) = U'(|x|) \frac{\sum_{j=1}^{N-1} \rho_j x_j^2}{|x|}  & \partial \R^N_+\\
\varphi_0(x) \to 0 & |x| \to \infty 
\end{cases}\]
and
\[ \begin{cases}
\Delta \psi_0=0 & \R^N_+ \\
\frac{\partial \psi_0}{\partial x_N} (x) = V'(|x|)\frac{\sum_{j=1}^{N-1} \rho_j x_j^2}{|x|} & \partial \R^N_+\\
\psi_0(x) \to 0 & |x| \to \infty.
\end{cases}\]
\begin{lemma}\label{lem: stima phi}
One has
\[|\varphi_0(x)| \le \frac{C}{(1+|x|)^\gamma},  \quad \abs{\nabla \varphi_0(x)} \le \frac{C}{(1+|x|)^{\gamma+1}}, \quad \abs{D^2 \varphi_0(x)} \le \frac{C}{(1+|x|)^{\gamma+2}} \]
%\textcolor{orange}{\text{da completare caso } $q=\frac{N}{N-2}$} 
whereas
\[  |\psi_0(x)| \le \frac{C}{(1+|x|)^{N-3}},  \quad \abs{\nabla \psi_0(x)} \le \frac{C}{(1+|x|)^{N-2}}, \quad \abs{D^2 \psi_0(x)} \le \frac{C}{(1+|x|)^{N-1}}. \]
\end{lemma}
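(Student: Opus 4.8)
The plan is to treat $\varphi_0$ and $\psi_0$ as solutions of a Neumann problem for the Laplacian on the half-space $\R^N_+$ with prescribed (decaying) boundary flux, and to estimate them via the explicit Neumann Green's function of the half-space. First I would write
\[
\varphi_0(x) = \int_{\partial \R^N_+} G(x,y')\, g(y')\, dy', \qquad g(y'):= U'(|y'|)\,\frac{\sum_{j=1}^{N-1}\rho_j (y'_j)^2}{|y'|},
\]
where $G(x,y')=c_N |x-y'|^{-(N-2)}$ is the kernel for the half-space Neumann problem restricted to the boundary (the reflection makes the harmonic extension vanish at infinity, which matches the stated decay condition). By \eqref{impo2} we have $|U'(r)| \le C U(r)/r$, and by \eqref{impo1} $U(r) \le C (1+r)^{-(N-2)}$ if $q>\frac{N}{N-2}$ and $U(r)\le C(1+r)^{-(q(N-2)-2)}$ if $q<\frac{N}{N-2}$ (the logarithmic case being borderline and handled the same way up to logs, which only improve things once one is below the threshold exponent). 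Hence, recalling the definition \eqref{gamma} of $\gamma$, the density satisfies
\[
|g(y')| \le \frac{C}{(1+|y'|)^{\gamma}}\cdot\frac{1}{1+|y'|}\cdot (1+|y'|) = \frac{C}{(1+|y'|)^{\gamma}}
\]
after accounting for the quadratic factor $|y'|^2$ and the $|y'|^{-1}$; more precisely $|g(y')|\le C(1+|y'|)^{1-(N-2)}=C(1+|y'|)^{-(N-3)}$ in the first case and $\le C(1+|y'|)^{-(q(N-2)-3)}$ in the second, i.e.\ $|g(y')|\le C(1+|y'|)^{-\gamma}$. Note $g$ is integrable over $\R^{N-1}$ precisely when $\gamma > N-1$; this fails in general, so the bound on $\varphi_0$ will come from the decay of the kernel, not from integrability of $g$ alone.

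Next I would carry out the standard convolution estimate. For $|x|$ large, split $\R^{N-1}$ into the regions $|y'|\le |x|/2$, $|y'|\ge 2|x|$, and $|y'-x'|\le |x|/2$ (with $x=(x',x_N)$), and on each region bound $|x-y'|$ from below appropriately. A routine computation of $\int_{\R^{N-1}} (1+|x-y'|)^{-(N-2)} (1+|y'|)^{-\gamma}\,dy'$ gives a bound of order $(1+|x|)^{-\gamma}$, using here that $0<\gamma$ (this is where the hypothesis $\gamma>0$ enters: it guarantees the ``near $y'=0$'' contribution is controlled and that the decay rate transferred to $\varphi_0$ is exactly $\gamma$ rather than something worse). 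The gradient and Hessian bounds follow the same way after differentiating the kernel under the integral sign: $|\nabla_x G(x,y')|\le C|x-y'|^{-(N-1)}$ and $|D^2_x G(x,y')|\le C|x-y'|^{-N}$, which gain one and two extra powers of decay respectively, yielding $|\nabla\varphi_0(x)|\le C(1+|x|)^{-(\gamma+1)}$ and $|D^2\varphi_0(x)|\le C(1+|x|)^{-(\gamma+2)}$. The analysis for $\psi_0$ is identical, with $U'$ replaced by $V'$ and $U$ by $V$; since $V(r)\le C(1+r)^{-(N-2)}$ always (the first line of \eqref{impo1}), the density decays like $(1+|y'|)^{-(N-3)}$, producing the stated $(1+|x|)^{-(N-3)}$, $(1+|x|)^{-(N-2)}$, $(1+|x|)^{-(N-1)}$ bounds.

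The main obstacle is the convolution estimate in the borderline range: when $\gamma$ is small (in particular $0<\gamma<1$, the case the authors flag but do not use), one must be careful that the region $|y'-x'|\le |x|/2$ does not produce a term decaying more slowly than $(1+|x|)^{-\gamma}$, and that the interior elliptic estimates used to pass from the $C^0$ bound to the $\nabla$ and $D^2$ bounds (standard Schauder/$L^p$ estimates on balls $B_{|x|/2}(x)$, rescaled) are applied with the correct scaling so that each derivative genuinely costs one power of $|x|$. A secondary technical point is justifying the Green's representation itself — that $\varphi_0$ defined by the potential is the unique harmonic function with the prescribed Neumann data decaying at infinity — which follows from uniqueness for the exterior-type Neumann problem on $\R^N_+$ via the maximum principle applied to the Kelvin transform, together with the decay just established. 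Once these are in place the three displayed chains of inequalities follow, and the case distinction in the exponent of the $\varphi_0$-bounds is exactly the case distinction \eqref{gamma} defining $\gamma$.
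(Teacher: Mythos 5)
Your overall strategy is the same as the paper's: represent $\varphi_0$ via the half-space Neumann kernel $c_N|x-y'|^{2-N}$, bound the boundary density, and deduce the decay via a convolution/scaling estimate. However, your bound on the boundary density is off by one power, and this is a real gap rather than a typo. You write $|g(y')|\le C(1+|y'|)^{1-(N-2)}=C(1+|y'|)^{-(N-3)}=C(1+|y'|)^{-\gamma}$ when $q>\frac{N}{N-2}$, but the correct bound is $|g(y')|\le C(1+|y'|)^{-(\gamma+1)}$: since $g(y')=U'(|y'|)\frac{\sum_j\rho_j (y'_j)^2}{|y'|}$ with $\frac{|\sum_j\rho_j (y'_j)^2|}{|y'|}\le C|y'|$, one needs $|r\,U'(r)|$, and by \eqref{impo1}--\eqref{impo2} one has $|r\,U'(r)|\le C(1+r)^{-(\gamma+1)}$ (the derivative $U'$ decays one order faster than $U$, so $|U'(r)|\le C U(r)/r$ and $|r U'(r)|\le C U(r)$, and $U(r)\le C(1+r)^{-(\gamma+1)}$). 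You seem to have kept the $+1$ from $\sum\rho_j(y'_j)^2/|y'|\sim|y'|$ without using the compensating $-1$ from $|U'|\le CU/r$. This matters downstream: with your stated density $(1+|y'|)^{-\gamma}$, the Poisson integral $\int_{\R^{N-1}}|x-y'|^{2-N}(1+|y'|)^{-\gamma}\,dy'$ does not even converge when $0<\gamma\le 1$ (the tail is $\sim\int^{\infty} r^{-\gamma}\,dr$), and for $\gamma>1$ it decays only like $(1+|x|)^{1-\gamma}$, not $(1+|x|)^{-\gamma}$ as you claim. With the corrected density $(1+|y'|)^{-(\gamma+1)}$ the integral converges for all $\gamma>0$ and yields precisely $(1+|x|)^{-\gamma}$, which is what the paper's scaling $y=|x|z$ argument produces. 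Once this exponent is fixed, your remaining steps — differentiating the kernel once and twice for the $\nabla\varphi_0$ and $D^2\varphi_0$ bounds, and the parallel argument for $\psi_0$ using $V(r)\sim r^{-(N-2)}$ — are sound and match the paper's approach.
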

\begin{proof}
By using the Poisson kernel for the half-space, and $N \ge 4$, we get
\[ \varphi_0(x) = - C \int_{\R^{N-1}}  \frac{\sum_{j=1}^{N-1} \rho_j y_j^2 \, U'(|y|)}{|y| \, |x-y|^{N-2}  } \, dy \]
where $y=(y_1, \dots, y_{N-1}, 0)$, see for instance \cite{copa}. 
By \eqref{impo2}
\[ |U'(r) r | \le \frac{C}{(1+r)^{\gamma+1}} \quad \text{if } \quad q \ne \frac{N}{N-2},\]
so that by
\[ |\varphi_0(x)|\le C |x| \int_{\R^{N-1} }  \frac{|xz|\, U'(|z||x|)}{\abs{\frac{x}{|x|}-z}^{N-2} } \, dz, \]
we deduce
\begin{align*} |\varphi_0(x)| &\le C |x| \int_{\R^{N-1}}  \frac{1 }{ (1+|zx|)^{\gamma+1} \abs{\frac{x}{|x|}-z}^{N-2}} \, dz  \le \frac{C}{|x|^{\gamma}}  \int_{\R^{N-1}}  \frac{1 }{(\frac{1}{|x|}+|z|)^{\gamma+1} \abs{\frac{x}{|x|}-z}^{N-2}} \, dz 
 \end{align*}
which gives the conclusion. 
\end{proof}
\begin{proposition}\label{prop:stima phie}
Let $\gamma > 0$. We have
\[ PU_{\delta, \xi} =U_{\delta, \xi}+ \delta^{-\frac{N}{p+1} +1} \varphi_0\left( \frac{x-\xi}{\delta} \right) + \zeta_\delta, \]
and
\[ PV_{\delta, \xi} = V_{\delta, \xi} +\delta^{-\frac{N}{q+1} +1} \psi_0\left( \frac{x-\xi}{\delta} \right) + s_\delta. \]
where
%\[ \zeta_\delta =O(\delta^{-\frac{N}{p+1} +1+\gamma}), \quad s_\delta = O(\delta^{-\frac{N}{q+1} + 2}), \]
\[ |\zeta_\delta(x)| \le  \frac{\delta^{-\frac{N}{p+1} +\min \{\gamma, 1\} } (\ln \delta)^\sigma}{(1+|x-\xi|/\delta)^{\gamma}}= \frac{\delta^{-\frac{N}{p+1} +\min \{\gamma, 1\}+\gamma}(\ln \delta)^\sigma}{(\delta+|x-\xi|)^{\gamma}}, \] 
\[ |s_\delta(x)| \le  \frac{\delta^{-\frac{N}{q+1} +1}(\ln \delta)^\tau}{(1+|x-\xi|/\delta)^{N-3}}=  \frac{\delta^{\frac{N}{p+1} }(\ln \delta)^\tau}{(\delta+|x-\xi|)^{N-3}}, \]
and where we denote  $\sigma=1$ if $\gamma=1,2$, $\sigma=0$ otherwise, whereas $\tau=1$ if $N=4, 5$, $\tau=0$ otherwise. Also
\[ \zeta_\delta = O(\delta^{-\frac{N}{p+1} + 1+ \min\{\gamma, 1\}} (\ln \delta)^{\hat \sigma}) \]
\[ s_\delta = O(\delta^{-\frac{N}{q+1} + 2} (\ln \delta)^{\hat \tau}), \]
where $\hat \sigma =1$ if $\gamma=1$, $\hat \sigma =0$ otherwise, and $\hat \tau =1$ if $N=4$, $\hat \tau=0$ otherwise.
\end{proposition}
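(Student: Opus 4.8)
The plan is to bound the two remainder terms directly, as solutions of the (decoupled) linear Neumann problems they obey, following the scheme of \cite{guopeng} for $\mu=0$ but keeping $\mu>0$ and working with the general decay rate $\gamma>0$. Set
\[ \zeta_\delta:=P\ud-\ud-\delta^{-\frac{N}{p+1}+1}\varphi_0\!\left(\tfrac{x-\xi}{\delta}\right),\qquad s_\delta:=P\vd-\vd-\delta^{-\frac{N}{q+1}+1}\psi_0\!\left(\tfrac{x-\xi}{\delta}\right). \]
From $-\Delta P\ud+\mu P\ud=-\Delta\ud=\vd^{q}$, from $-\Delta\ud=\vd^{q}$, and from $\Delta\varphi_0=0$ in $\R^N_+$, one reads off that $\zeta_\delta$ solves $-\Delta\zeta_\delta+\mu\zeta_\delta=-\mu\big(\ud+\delta^{-\frac{N}{p+1}+1}\varphi_0(\tfrac{\cdot-\xi}{\delta})\big)$ in $\Omega$ with Neumann datum $g_\delta:=-\partial_\nu\ud-\partial_\nu\big[\delta^{-\frac{N}{p+1}+1}\varphi_0(\tfrac{\cdot-\xi}{\delta})\big]$ on $\partial\Omega$, and $s_\delta$ obeys the analogous problem with $\vd$ and $\psi_0$. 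Two points need care here: $\varphi_0$ is defined only on $\R^N_+$ while $\Omega$ is not, so one multiplies the $\varphi_0$--term by a cutoff supported in $B_s(\xi)$ and checks that the commutator terms thereby introduced into the source are $O(\delta^k)$ for every $k$ (since $\varphi_0$ is then evaluated at points of size $\sim1/\delta$); and the Neumann trace of the $\varphi_0$--term has been \emph{engineered}, through the boundary condition $\partial_{x_N}\varphi_0=U'(|x|)\,|x|^{-1}\sum_{j}\rho_j x_j^2$, to cancel the leading part of $-\partial_\nu\ud$ along $\partial\Omega$, so that $g_\delta$ is genuinely of lower order than either of its constituents.

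The core of the proof is the estimate of the sources. The interior term is controlled by the bubble decay $|\ud(x)|\le C\,\delta^{-\frac{N}{p+1}}(1+|x-\xi|/\delta)^{-(\gamma+1)}$ coming from \eqref{impo1}, \eqref{impo2}, \eqref{gamma}, together with Lemma~\ref{lem: stima phi} for $\varphi_0$; both are integrable enough over $\Omega$ to produce the stated gain. For $g_\delta$ one writes $x=(x',\rho(x'))$ near $\xi$ with $\rho(x')=\sum_j\rho_j x_j^2+O(|x'|^3)$, expands the unit normal, the scalar product $(x-\xi)\cdot\nu$, and $|x-\xi|$ in powers of $|x'|$, and compares with the Neumann trace of the $\varphi_0$--term; the leading parts cancel by construction, and what survives is governed by (i) the cubic correction in $\rho$, which costs one power of $\delta$ after rescaling, (ii) the deviations of $\nu$ from $-e_N$ and of $|x-\xi|$ from $|x'|$, again $O(\delta)$, and (iii) the evaluation of $\varphi_0$ on the rescaled boundary rather than on $\{x_N=0\}$, controlled by $|D^2\varphi_0|$ from Lemma~\ref{lem: stima phi}. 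These combine to $|g_\delta(x)|\le C\,\delta^{-\frac{N}{p+1}+\min\{\gamma,1\}}(\ln\delta)^\sigma(1+|x-\xi|/\delta)^{-(\gamma+1)}$ on $\partial\Omega$: the exponent is $\min\{\gamma,1\}$ because when $\gamma\ge1$ the cubic term in $\rho$ is the bottleneck and yields a clean $\delta$, whereas when $\gamma<1$ it is the slow decay of $\varphi_0$ itself that caps the gain at $\delta^\gamma$; the logarithm appears only in the borderline cases $\gamma=1,2$. For $\psi_0$ and $s_\delta$ the computation is parallel, except that $V$ always decays like $r^{-(N-2)}$ (first line of \eqref{impo1}), so only the exponent $N-3$ ever occurs, and $N\ge4$ is exactly what keeps $\psi_0$ well defined and decaying.

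It remains to invert $-\Delta+\mu$ with Neumann data. For the pointwise weighted bounds I would argue by comparison, or equivalently through the Green representation for the Neumann problem as in \cite{r,kimpi}: for $A$ large the functions $A\,\delta^{-\frac{N}{p+1}+\min\{\gamma,1\}}(\ln\delta)^\sigma(1+|x-\xi|/\delta)^{-\gamma}$ and $A\,\delta^{-\frac{N}{q+1}+1}(\ln\delta)^\tau(1+|x-\xi|/\delta)^{-(N-3)}$ are supersolutions of the respective problems---here one uses that $0<\gamma<N-2$ (respectively $0<N-3<N-2$) makes $(1+|y|)^{-\gamma}$ superharmonic at a rate compatible with the source and with $g_\delta$ after rescaling---so the maximum principle pins $|\zeta_\delta|$ and $|s_\delta|$ below these barriers, which gives the first displayed bounds. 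For the remaining estimates $\zeta_\delta=O(\delta^{-\frac{N}{p+1}+1+\min\{\gamma,1\}}(\ln\delta)^{\hat\sigma})$ and $s_\delta=O(\delta^{-\frac{N}{q+1}+2}(\ln\delta)^{\hat\tau})$ I would instead feed the source estimates of the previous step into the continuity of $\mathcal I^{*}$ (the $L^s$ elliptic estimate of \cite{CK,Agmon}); this produces the extra power of $\delta$ and the sharper logarithms, the borderline cases being precisely $\gamma=1$ (so $\hat\sigma=1$) and $N=4$ (so $\hat\tau=1$), exactly the threshold-integral phenomenon already met in the proof of Proposition~\ref{stima-errore}.

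I expect the main obstacle---and the only genuinely new point compared with the computations in \cite{r,rewe,kimpi} and with the $\mu=0$ analysis of \cite{guopeng}---to be the bookkeeping of $g_\delta$: one must keep track simultaneously of the third-order geometry of $\partial\Omega$, of the discrepancy between $\Omega$ and $\R^N_+$ after rescaling, and of the curvature corrections to $\nu$ and to $|x-\xi|$, and show that they assemble into precisely the gain $\delta^{\min\{\gamma,1\}}$ with the right spatial weight and the correct borderline logarithms. This is also the step which makes transparent why, for the \emph{application} of the proposition, one needs $\gamma\ge1$: only then is the $\varphi_0$--term leading over $\zeta_\delta$.
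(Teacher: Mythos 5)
Your proposal follows essentially the same strategy as the paper: identify the linear Neumann problem solved by the remainder, exploit the boundary cancellation that $\varphi_0$ is designed to produce, estimate the residual interior and boundary sources, and invert. The paper does, however, decompose $\zeta_\delta=\varphi_1+\varphi_2+\varphi_3$ (boundary data; $\Delta\varphi_0$ on $\Omega\setminus\R^N_+$ together with $\mu\varphi_0$; the $\mu U_{\delta,\xi}$ source) and estimates each piece via the Green representation, which is cleaner bookkeeping than your single-remainder version.

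Two technical points in your write-up need repair. First, the barrier/comparison route and the Green representation are \emph{not} equivalent here, and only the latter closes. For a Neumann problem with data centered at $\xi\in\partial\Omega$, the radial barrier $(1+|x-\xi|/\delta)^{-\gamma}$ has normal derivative on $\partial\Omega$ proportional to $(x-\xi)\cdot\nu(x)$, which near $\xi$ equals $-\rho(x')+O(|x'|^3)=-\sum_j\rho_j x_j^2+O(|x'|^3)$. This quadratic form has no definite sign (indeed $H(\xi_0)<0$ is assumed, but the individual $\rho_j$ need not share a sign), so the barrier's normal derivative cannot dominate $g_\delta$ pointwise and the maximum principle does not apply. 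The Green-function representation, which only sees $|g_\delta|$, is what the paper actually uses and is the route you should commit to. Second, the commutator terms produced by your cutoff are not $O(\delta^k)$ for every $k$: since $\varphi_0$ decays only polynomially, on the annulus $\{s/2<|x-\xi|<s\}$ they are of order $\delta^{-\frac{N}{p+1}+1+\gamma}$, which is small enough but not super-polynomially so. Also note that the cutoff does not by itself resolve the fact that $\varphi_0$ must be evaluated at points of $\Omega\setminus\R^N_+$ near $\xi$; one still needs an extension of $\varphi_0$ across $\partial\R^N_+$, and the residual $\Delta\varphi_0\ne 0$ on that thin sliver is exactly what the paper's $\varphi_2$-estimate handles, using the $|D^2\varphi_0|$ bound of Lemma~\ref{lem: stima phi}.
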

%\textcolor{orange}{
%\begin{remark}
%Con l'ipotesi $q \ge \frac{4}{N-2}$, così $\gamma \ge 1$, e l'espressione sopra diventa
%\[ |\zeta_\delta(x)| \le \frac{\delta^{-\frac{N}{p+1} +1 } (\ln \delta)^\sigma}{(1+|x-\xi|/\delta)^{\gamma}}=\frac{\delta^{-\frac{N}{p+1} +1+\gamma}(\ln \delta)^\sigma}{(\delta+|x-\xi|)^{\gamma}}. \] 
%\end{remark}}
\begin{proof}
We follow arguments in \cite{ry} and \cite{PSaT}. 
We write
$$  PU_{\delta,\xi} =U_{\delta, \xi}+ \delta^{-\frac{N}{p+1} +1} \varphi_0\left( \frac{x-\xi}{\delta} \right) + \varphi_1 + \varphi_2 + \varphi_3 $$
where
\[ \begin{cases}
- \Delta \varphi_1 + \mu \varphi_1=0 & \text{ in } \Omega \\
\partial_\nu \varphi_1 = - \partial_\nu U_{\delta, \xi} - \delta^{-\frac{N}{p+1} +1} \partial_\nu \left( \varphi_0 \left(\frac{x-\xi}{ \delta} \right) \right)  & \text{ on } \partial \Omega,
\end{cases} 
\] 
\[
\begin{cases}
- \Delta \varphi_2 + \mu \varphi_2=\delta^{-\frac{N}{p+1} + 1} (\Delta - \mu)\left( \varphi_0 \left(\frac{x-\xi}{ \delta} \right) \right) & \text{ in } \Omega \\
\partial_\nu \varphi_2 = 0  & \text{ on } \partial \Omega
\end{cases} 
\]
and
\[
\begin{cases}
- \Delta \varphi_3 + \mu \varphi_3= \mu U_{\delta, \xi} & \text{ in } \Omega \\
\partial_\nu \varphi_3 = 0  & \text{ on } \partial \Omega
\end{cases} \]
We will estimate separately $\varphi_1, \varphi_2, \varphi_3$. We preliminary notice that by translations invariance, we can assume without loss of generality $\xi=0$. 

\textit{Step 1}
Let us start with $\varphi_1$. 

\textit{Step 1.1}
Take $x \in \partial \Omega \cap B_s^c(0)$. Thus
\[ \abs{\partial_\nu U_\delta} \le  \delta^{-\frac{N}{p+1} - 1}  |U'(|x|/\delta)|=
\begin{cases}
O(\delta^{N-2-\frac{N}{p+1}}) & \text{ if } q>\frac{N}{N-2}\\
%O(\delta^{N-2-\frac{N}{p+1}}\ln \delta) & \text{ if } q=\frac{N}{N-2}\\
O(\delta^{q(N-2)-2-\frac{N}{p+1}}) & \text{ if } q<\frac{N}{N-2},  
\end{cases}\]
see also \cite[Equation (B.3)]{PST}. On the other hand, since $|x| \ge s$, 
\[ \abs{ \partial_\nu \varphi_0 \left( \frac x \delta \right) } \le \delta^{-1} \abs{\nabla \varphi_0 \left( \frac x \delta \right) } \le  \frac{C \, \delta^{\gamma}}{(\delta+|x|)^{\gamma+1}}=O(\delta^\gamma)\]
which yields
\begin{equation}\label{stima der fuori} \partial_\nu \varphi_1(x) =\begin{cases}
O(\delta^{N-2-\frac{N}{p+1}}) & \text{ if } q>\frac{N}{N-2}\\
%O(\delta^{N-2-\frac{N}{p+1}}\ln \delta) & \text{ if } q=\frac{N}{N-2}\\
O(\delta^{q(N-2)-2-\frac{N}{p+1}}) & \text{ if } q<\frac{N}{N-2},  
\end{cases} = O(\delta^{-\frac{N}{p+1} + \gamma +1})\end{equation}
if $x \in \partial \Omega \cap B_s^c(0)$.

\textit{Step 1.2}
Let us now consider $x \in \partial \Omega \cap B_s(0)$. Thus, also using
\[  \abs{ U'(|x|/\delta)} \le C (1+|x|/\delta)^{\gamma+2}, \]
one has 
\begin{align} \nonumber  \partial_\nu \ue(x)&= \delta^{-\frac{N}{p+1} - 1}  U'(|x|/\delta) \frac{x}{|x|} \cdot \nu(x)= \delta^{-\frac{N}{p+1} - 1}  U'(|x|/\delta) \frac{ \sum_{j=1}^{N-1} \rho_j x_j^2 + O(|x'|^3)}{|x|\sqrt{1+|\nabla \rho(x')|^2}} \\
\label{stima 1}& =  \delta^{-\frac{N}{p+1} - 1}  U'(|x|/\delta) \frac{ \sum_{j=1}^{N-1} \rho_j x_j^2}{|x|\sqrt{1+|\nabla \rho(x')|^2}} + \delta^{-\frac{N}{p+1}+\gamma+1} O \left( \frac{|x'|^2}{(\delta + |x'|)^{\gamma+2}  } \right) \end{align}
see \cite{PST}. 
Also, (see \cite[A.8--A.13]{PSaT}), let $\hat \varphi_{0,\delta}(x):=\delta^{-\frac{N}{p+1} + 1} \varphi_0 \left( \frac x \delta \right)$.
One has
\begin{align}
\nonumber\partial_\nu \hat \varphi_{0,\delta}(x)&=\nabla \hat \varphi_{0,\delta} (x) \cdot \nu(x) \\
\nonumber &= -\frac{\partial \hat \varphi_{0,\delta}}{\partial x_N} (x', 0) + ((\nabla \hat \varphi_{0,\delta}(x) - \nabla \hat \varphi_{0,\delta}(x', 0))\cdot \nu(x) + \nabla \hat \varphi_{0,\delta} (x', 0) \cdot (\nu(x) + e_N)) \\
\label{stima 2} &= - \delta^{-\frac{N}{p+1} - 1}  U'(|x'|/\delta) \frac{\sum_{j=1}^{N-1} \rho_j x_j^2}{|x'|}  +\delta^{-\frac{N}{p+1}+\gamma+1} O\left( \frac{1}{(\delta +|x'|)^{\gamma}} \right).
\end{align}
Indeed,
\[ \abs{\nabla \hat \varphi_{0,\delta} (x', 0) \cdot (\nu(x) + e_N) } \le \delta^{-\frac{N}{p+1}} \abs{\nabla \varphi_0\left( \frac {x'} \delta, 0\right)} |\nu(x) + e_N|. \]
Using the expression of $\nu$, 
\[ |\nu(x) + e_N| = O(|x'|), \]
whence
\[ \abs{\nabla \hat \varphi_\delta (x', 0) \cdot (\nu(x) + e_N) } \le \delta^{-\frac{N}{p+1}+\gamma+1} O\left( \frac{|x'|}{(\delta +|x'|)^{\gamma+1}} \right). \]
On the other hand, there exist $t_1, \dots t_N \in (0, 1)$ such that 
\begin{align*} \delta^{\frac{N}{p+1}} \abs{\nabla \hat \varphi_\delta(x) - \nabla \hat \varphi_\delta(x', 0)} &= \abs{ \nabla \varphi_0 \left( \frac x \delta \right) -  \nabla \varphi_0 \left( \frac {x'} \delta, 0 \right)} \\
&=  \abs{\left( \frac{\partial^2 \varphi_0}{\partial x_1 \partial x_N} \left( \frac{x'}{\delta}, t_1 \frac{x_1}{\delta}\right), \dots,  \frac{\partial^2 \varphi_0}{\partial^2 x_N} \left( \frac{x'}{\delta}, t_N \frac{x_N}{\delta}\right) \right)} \frac{|x_N|}{\delta}  \\
&\le \frac{C |x_N|}{\delta (1 + |x|/\delta)^{\gamma+2}} = \delta^{\gamma+1} O\left( \frac{|x'|^2}{(\delta+ |x'|)^{\gamma+2}} \right). %x sta sul bordo
\end{align*}
Observe that if $|x'| $ is close to $0$ by Taylor expansion 
\[  \frac{U'(|x'|/\delta)}{|x'|} = \frac{U'(|x|/\delta)}{|x|\sqrt{1+|\nabla \rho(x')|^2}} + \delta^{\gamma +2} O\left( \frac{1}{(\delta+ |x'|)^{\gamma+2}} \right), \] 
therefore by \eqref{stima 1} and \eqref{stima 2}, 
\[ \partial_\nu \varphi_1(x) = \delta^{-\frac{N}{p+1}+\gamma+1} O\left( \frac{1}{(\delta +|x'|)^{\gamma}} \right) \quad \text{ on } \partial \Omega \cap B_s(0). \]

\textit{Step 1.3}
We are now ready to deduce the estimate on $\varphi_1$. We first recall that there exists a Green function $G$ for the following problem
\[ -\Delta u + \mu u=0 \text{ in } \Omega, \quad \partial_\nu u = f \text{ on } \partial \Omega \]
such that 
\[ u(x)= \int_{\partial \Omega} G(x, y) f(y) \, dy \]
and 
\[ |G(x, y)| \le \frac{C}{|x-y|^{N-2}}. \]
Let 
\[ G_\delta(x, y)=\delta^{N-2} G(\delta x, \delta y). \]
Thus
\[ \hat \varphi_{1, \delta}(x):=\delta^{\frac{N}{p+1}} \varphi_1(\delta x)=\int_{\partial \Omega_\delta} G_\delta(x, y) \partial_\nu \hat \varphi_{1, \delta}(y) \, dy \le C \int_{\partial \Omega_\delta}  \frac{ \partial_\nu \hat \varphi_{1, \delta}(y)}{|x-y|^{N-2}} \, dy. \]
We split
\[ \int_{\partial \Omega_\delta}  \frac{ \partial_\nu \hat \varphi_{1, \delta}(y)}{|x-y|^{N-2}} \, dy = \int_{\partial \Omega_\delta \cap B_s(0)} \frac{ \partial_\nu \hat \varphi_{1, \delta}(y)}{|x-y|^{N-2}} \, dy  + \int_{\partial \Omega_\delta \cap B_s^c(0)} \frac{ \partial_\nu \hat \varphi_{1, \delta}(y)}{|x-y|^{N-2}} \, dy, \]
where $\Omega_\delta:=\delta^{-1} \Omega$, and we estimate the two integrals separately. 
Let us first consider $\partial \Omega_\delta \cap B_s^c(0)$. 
Here, $\partial_\nu \hat \varphi_{1, \delta}(x) = O(\delta^{\gamma+2})$, see \eqref{stima der fuori}. Therefore, 
\begin{equation}\label{stima fuori zeta} \int_{\partial \Omega_\delta \cap B_s^c(0)} \frac{ |\partial_\nu \hat \varphi_{1, \delta}(y)|}{|x-y|^{N-2}} \, dy \le C \delta^{\gamma +2} \int_{\partial \Omega_\delta \cap B_s^c(0)} |x-y|^{2-N} \, dy = O( \delta^{\gamma+1}). \end{equation}

Now let us consider the set $\partial \Omega_\delta \cap B_s(0)$. This time, $\partial_\nu \hat \varphi_{1, \delta} (x) =O(\frac{\delta^2}{(1+|x|)^{\gamma}})$. 
We claim that 
\begin{align}\label{stima gamma} \int_{\partial \Omega_\delta \cap B_s(0)} \frac{ |\partial_\nu \hat \varphi_{1, \delta}(y)|}{|x-y|^{N-2}} \, dy &\le C \delta^{2} \int_{\partial \Omega_\delta \cap B_s(0)} \frac{1}{|x-y|^{N-2}(1+|y|)^\gamma} \le \frac{C \delta^{1+ \min \{\gamma, 1\}} (\ln \delta)^{\tilde \tau}}{(1+|x|)^{\max\{\gamma, 1\}-1}}, \end{align}
where $\tilde \tau=0$ if $\gamma \ne 1$, and $\tilde \tau=1$ if $\gamma=1$. Notice that Lemma A.2 in \cite{PSaT} shows the estimate above in case $\gamma \ge 1$, up to straightening the boundary. 

As for the case $\gamma < 1$, let $x \in \R^N_+$, $d:=\frac 12 |x|$.
The estimate on the set  $B_d(x), B_d(0)$ are exactly as in Lemma A.2 in \cite{PSaT}, and one gets 
\[ \int_{B_d(x)} \frac{1}{|x-y|^{N-2}(1+|y|)^\gamma}  \le C d^{1-\gamma}, \quad \quad  \int_{B_d(0)} \frac{1}{|x-y|^{N-2}(1+|y|)^\gamma}  \le C d^{1-\gamma}. \]
Whereas
\begin{align*} \int_{B_{\frac{1}{\delta}}(0) \setminus (B_d(x) \cup B_d(0))} \frac{1}{|x-y|^{N-2}(1+|y|)^\gamma}  & \le C \, \int_{B_{\frac{1}{\delta}}(0) \setminus  (B_d(x) \cup B_d(0))} |y|^{2-N} (1+|y|)^{-\gamma} \\
& \le C \, \int_d^{\frac{1}{\delta}} r^{-\gamma} \le C\, \delta^{\gamma-1}, \end{align*}
also using $\gamma < 1$. 
Thus we get \eqref{stima gamma}, up to straightening the boundary. Estimate \eqref{stima gamma}, in view of \eqref{stima fuori zeta},  gives 
\[ |\varphi_1(x)| \le \frac{C \delta^{-\frac{N}{p+1} + 1+ \min\{\gamma, 1\}} (\ln \delta)^{\tilde \tau}}{(1+|x|/\delta)^{\max\{ \gamma, 1\}-1}} \le  \frac{C \delta^{-\frac{N}{p+1} + \min \{ \gamma, 1 \}} (\ln \delta)^{\tilde \tau}}{(1+|x|/\delta)^{\gamma}}. \]
Also
\[ \varphi_1 = O(\delta^{-\frac{N}{p+1} + 1+ \min\{\gamma, 1\}} (\ln \delta)^{\tilde \tau}). \]
%%%%%%%%%%

\textit{Step 2}
We now estimate $\varphi_2$. 
We define $\hat \varphi_{2,\delta}(x):=\delta^{\frac{N}{p+1}} \varphi_{2,\delta}(\delta x)$. Then
\[ -\Delta \hat \varphi_{2,\delta} (x) + \mu \delta^2 \hat \varphi_{2,\delta} = \delta \Delta \varphi_0(x) - \mu \delta^3 \varphi_0(x) \text{ in } \Omega_\delta, \quad \partial_\nu \hat \varphi_{2,\delta}=0 \text{ on } \partial \Omega_\delta. \]
Using Lemma 3.2 in \cite{rewe} and estimates on $\varphi_0$ in Lemma \ref{lem: stima phi} above, we get
\begin{align*} |\hat \varphi_{2,\delta}(x)| &\le C \, \left( \delta \int_{\Omega_\delta} \frac{|\Delta \varphi_0|}{|x-y|^{N-2}} + \delta^3  \int_{\Omega_\delta} \frac{|\varphi_0|}{|x-y|^{N-2}} \right) \\
&= C \, \left( \delta \int_{\Omega_\delta \setminus \R^N_+} \frac{|\Delta \varphi_0|}{|x-y|^{N-2}} + \delta^3  \int_{\Omega_\delta} \frac{|\varphi_0|}{|x-y|^{N-2}} \right) \\
& \le C\, \delta^3 \left( \frac 1 {\delta^2} \int_{\Omega_\delta \setminus \R^N_+} |x-y|^{2-N}(1+|y|)^{-\gamma-2} +  \int_{\Omega_\delta} |x-y|^{2-N}(1+|y|)^{-\gamma} \right). \end{align*}
We claim that 
\begin{equation}\label{claim omega eps} \int_{\Omega_\delta \setminus \R^N_+} |x-y|^{2-N}(1+|y|)^{-\gamma-2} \le C \, \delta^{\gamma}  \frac{1}{(1+|x|)^\gamma}. \end{equation}
Indeed, let us assume first $|x| \le 2$. Then
\[ \int_{\Omega_\delta \setminus \R^N_+} |x-y|^{2-N}(1+|y|)^{-\gamma-2}= \delta^\gamma \int_{\Omega \setminus \R^N_+} |\delta x - y|^{2-N}(\delta+|y|)^{-\gamma -2} \le C \, \delta^\gamma, \]
since $|x\delta|\le 2 \delta < \frac 12 d(\Omega \setminus \R^N_+, 0)$ if $\delta$ is small enough, and $|y|> d(\Omega \setminus \R^N_+, 0)$.
On the other hand, let $|x| \ge 2$. Then $d:=\frac 12 |x| \ge 1$, and there exist two positive constants $c_1>c_2$ such that 
\[ \Omega_\delta \setminus \R^N_+ \subseteq B_{\frac{c_1d}{\delta}}(0) \setminus B_{\frac{c_2d}{\delta}}(0). \]
Thus
\begin{multline*}  \int_{\Omega_\delta \setminus \R^N_+} |x-y|^{2-N}(1+|y|)^{-\gamma-2} \\ \le \int_{B_{\frac{d}{\delta}(x)}} |x-y|^{2-N}(1+|y|)^{-\gamma-2} + \int_{B_{\frac{c_1d}{\delta}}(0) \setminus (B_{\frac{c_2d}{\delta}}(0) \cup B_{\frac{d}{\delta}}(x))} |x-y|^{2-N}(1+|y|)^{-\gamma-2}. \end{multline*}
The first integral gives 
\begin{equation}\label{first claim} \int_{B_{\frac{d}{\delta}(x)}} |x-y|^{2-N}(1+|y|)^{-\gamma-2} \le C\,  \frac{\delta^{\gamma+2}}{d^{\gamma+2}} \int_{B_{\frac{d}{\delta}(0)}} |y|^{2-N} \le C\, \frac{\delta^\gamma}{d^\gamma}. \end{equation}
As for the second one, we preliminary notice that for any $y \in B_{\frac{d}{\delta}}^c(x)$ one has $|x-y| \ge d/\delta$. Thus if $|y| \le 2 |x|$ then
\[ |x-y|^{N-2} \ge \left( \frac d \delta \right)^{N-2} \ge C |x|^{N-2} \ge C |y|^{N-2}. \]
Moreover, if $|y| \ge 2|x|$ then $|x-y| \ge \frac12\, |y |$, and $|x-y|^{N-2} \ge C |y|^{N-2}$. As a consequence, 
\begin{align}\label{second claim} \nonumber \int_{B_{\frac{c_1d}{\delta}}(0) \setminus (B_{\frac{c_2d}{\delta}}(0) \cup B_{\frac{d}{\delta}}(x))} |x-y|^{2-N}(1+|y|)^{-\gamma-2} & \le C\, \int_{B_{\frac{c_1d}{\delta}}(0) \setminus (B_{\frac{c_2d}{\delta}}(0) \cup B_{\frac{d}{\delta}}(x))}  |y|^{2-N}(1+|y|)^{-2-\gamma} \\
& \le C\, \int_{\frac{c_2d}{\delta}}^{\frac{c_1d}{\delta}} r^{-1-\gamma} 
\le C\, \frac{\delta^\gamma}{d^\gamma}. \end{align}
Estimate \eqref{claim omega eps} now follows from \eqref{first claim} and \eqref{second claim}. 
Finally, by similar arguments as the ones we used for \eqref{stima gamma}, 
\[ \int_{\Omega_\delta} |x-y|^{2-N}(1+|y|)^{-\gamma}  \le C \frac{\delta^{\min\{\gamma, 2\} -2} (\ln \delta)^{\tilde \sigma}}{(1+|x|)^{\max\{\gamma, 2\}-2}} \]
where $\tilde \sigma=1$ if $\gamma=2$ and $\tilde \sigma=0$ if $\gamma \ne 2$. 

Now, we conclude 
\[  |\hat \varphi_{2,\delta}(x)| \le C \, \delta^3 \left( \frac{\delta^{\gamma-2}}{(1+|x|)^\gamma} +  \frac{\delta^{\min\{\gamma, 2\} -2} (\ln \delta)^{\tilde \sigma}}{(1+|x|)^{\max\{\gamma, 2\}-2}}, \right)= C \, \delta \left( \frac{\delta^{\gamma}}{(1+|x|)^\gamma} +  \frac{\delta^{\min\{\gamma, 2\} } (\ln \delta)^{\tilde \sigma}}{(1+|x|)^{\max\{\gamma, 2\}-2}}, \right)  \]

Therefore, 
\[  |\varphi_2(x)| \le \frac{C\,  \delta^{-\frac{N}{p+1} + 1}(\ln \delta)^{\tilde \sigma }}{(1+|x|/\delta)^\gamma} \]
and
\[ \varphi_2=O(\delta^{-\frac{N}{p+1} + 1+ \min\{\gamma, 2 \}} (\ln \delta)^{\tilde \sigma }) \]
and the conclusion follows.

\textit{Step 3.} We are left with the estimate of $\varphi_3$.
Take $\hat \varphi_{3,\delta}(x):=\delta^{\frac{N}{p+1}} \varphi_3(\delta x)$. Then
\[ \begin{cases}
-\Delta \hat \varphi_{3,\delta} + \mu \delta^2 \hat \varphi_{3,\delta}= \mu \delta^2 U & \text{ in } \Omega_\delta \\
\partial_\nu \hat \varphi_{3,\delta}=0 & \text{ on } \partial \Omega_\delta. 
\end{cases} \]
Hence
\[ |\hat \varphi_{3,\delta} (x)| \le C \delta^2 \int_{\Omega_\delta} \frac{U}{|x-y|^{N-2}} \le C \delta^2 \int_{\Omega_\delta} \frac{1}{(1+|y|)^{\gamma+1}|x-y|^{N-2}}. \]
As above, we conclude
\[ |\hat \varphi_{3,\delta}(x)| \le C \frac{\delta^{1+\min\{\gamma, 1 \}} (\ln \delta)^{\tilde \tau}}{(1+|x|)^{\max\{\gamma, 1\} - 1}} \]
where $\tilde \tau=1$ if $\gamma=1$, $\tilde \tau=0$ otherwise.
Whence
\[ \varphi_3 = O(\delta^{-\frac{N}{p+1} + 1+ \min\{\gamma, 1\}} (\ln \delta)^{\tilde \tau}) \]
and 
\[ |\varphi_3(x)| \le C \frac{ \delta^{-\frac{N}{p+1} + \min \{ \gamma, 1 \}} (\ln \delta)^{\tilde \tau}}{(1+|x|/\delta)^{\gamma}}. \qedhere \]
\end{proof}
\begin{corollary}\label{cor:stima phie}
Let $\gamma \ge 1$. We have
\[ PU_{\delta, \xi} =U_{\delta, \xi}+ \delta^{-\frac{N}{p+1} +1} \varphi_0\left( \frac{x-\xi}{\delta} \right) + \zeta_\delta, \]
and
\[ PV_{\delta, \xi} = V_{\delta, \xi} +\delta^{-\frac{N}{q+1} +1} \psi_0\left( \frac{x-\xi}{\delta} \right) + s_\delta. \]
where
%\[ \zeta_\delta =O(\delta^{-\frac{N}{p+1} +1+\gamma}), \quad s_\delta = O(\delta^{-\frac{N}{q+1} + 2}), \]
\[ |\zeta_\delta(x)| \le  \frac{\delta^{-\frac{N}{p+1} +1 } (\ln \delta)^\sigma}{(1+|x-\xi|/\delta)^{\gamma}}= \frac{\delta^{-\frac{N}{p+1} +1+\gamma}(\ln \delta)^\sigma}{(\delta+|x-\xi|)^{\gamma}}, \] 
\[ |s_\delta(x)| \le  \frac{\delta^{-\frac{N}{q+1} +1}(\ln \delta)^\tau}{(1+|x-\xi|/\delta)^{N-3}}=  \frac{\delta^{\frac{N}{p+1} }(\ln \delta)^\tau}{(\delta+|x-\xi|)^{N-3}}, \]
and where we denote  $\sigma=1$ if $\gamma=1,2$, $\sigma=0$ otherwise, whereas $\tau=1$ if $N=4, 5$, $\tau=0$ otherwise. Also
\[ \zeta_\delta = O(\delta^{-\frac{N}{p+1} + 2} (\ln \delta)^{\hat \sigma}) \]
\[ s_\delta = O(\delta^{-\frac{N}{q+1} + 2} (\ln \delta)^{\hat \tau}), \]
where $\hat \sigma =1$ if $\gamma=1$, $\hat \sigma =0$ otherwise, and $\hat \tau =1$ if $N=4$, $\hat \tau=0$ otherwise.
\end{corollary}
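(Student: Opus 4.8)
The plan is to deduce the corollary directly from Proposition \ref{prop:stima phie}, which was already established in the broader range $\gamma>0$; here nothing new has to be estimated, only specialized. First I would observe that the hypothesis $\gamma\ge 1$ forces $\min\{\gamma,1\}=1$ and $\max\{\gamma,1\}=\gamma$. Substituting these two identities into the pointwise bound $|\zeta_\delta(x)|\le \delta^{-\frac{N}{p+1}+\min\{\gamma,1\}}(\ln\delta)^\sigma(1+|x-\xi|/\delta)^{-\gamma}$ of Proposition \ref{prop:stima phie} yields exactly the claimed $|\zeta_\delta(x)|\le \delta^{-\frac{N}{p+1}+1}(\ln\delta)^\sigma(1+|x-\xi|/\delta)^{-\gamma}$, and rewriting $1+|x-\xi|/\delta=(\delta+|x-\xi|)/\delta$ produces the second (homogeneous) form with the extra factor $\delta^{\gamma}$. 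The same substitution turns $\zeta_\delta=O(\delta^{-\frac{N}{p+1}+1+\min\{\gamma,1\}}(\ln\delta)^{\hat\sigma})$ into $\zeta_\delta=O(\delta^{-\frac{N}{p+1}+2}(\ln\delta)^{\hat\sigma})$.

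For the $V$-component I would point out that the role played by $\gamma$ for the first bubble is played by $N-3$ for the second one — this is precisely the decay rate of $\psi_0$ recorded in Lemma \ref{lem: stima phi} — and that under the standing hypothesis $N\ge 4$ one has $N-3\ge 1$, so the ``$\gamma\ge 1$'' regime automatically covers $V$ with no further restriction. The estimates for $s_\delta$ are then copied verbatim from Proposition \ref{prop:stima phie}. The logarithmic exponents are inherited as well: $\sigma=1$ survives only in the threshold cases $\gamma=1,2$ (respectively $\tau=1$ only for $N=4,5$), which are exactly the values of $\gamma$ (resp. of $N$) for which the integrals controlling $\varphi_1,\varphi_2,\varphi_3$ in the proof of the Proposition — e.g. estimate \eqref{stima gamma} and its companion bound on $\int_{\Omega_\delta}|x-y|^{2-N}(1+|y|)^{-\gamma}$ — acquire a logarithmic factor; and $\hat\sigma=1$ only at $\gamma=1$, $\hat\tau=1$ only at $N=4$, for the corresponding $L^\infty$-type bounds.

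The only point deserving a line of care is the bookkeeping of these logarithmic exponents and of the apparent mismatch ``$1+\gamma$ versus $2$'' in the exponent of $\delta$ between the two displayed forms of $\zeta_\delta$: this is not a contradiction, it is simply the identity $\min\{\gamma,1\}=1$ used as an equality, in contrast to the strict-domination argument one uses in the regime $\gamma<1$. I do not expect any genuine obstacle: the corollary is nothing but the form of Proposition \ref{prop:stima phie} that is actually invoked throughout Section 2, isolated here for the reader's convenience, and its proof is a one-line specialization of the preceding proposition together with the remark $N-3\ge 1$.
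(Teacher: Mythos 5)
Your proposal is correct and matches exactly how the paper treats the corollary: no separate proof is given precisely because it is the one-line specialization $\min\{\gamma,1\}=1$, $\max\{\gamma,1\}=\gamma$ of Proposition \ref{prop:stima phie}, and the $s_\delta$ estimates (where the role of $\gamma$ is played by $N-3\ge 1$) are already written in this specialized form in the proposition itself, so they carry over verbatim.
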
 

By completely similar arguments as above, we conclude the following 
\begin{proposition}\label{prop:expansion pphi}
Let $\gamma > 0$. 
We have
\[ P\Phi^i_{\delta, \xi} =\Phi^i_{\delta, \xi}+ \partial_i\left(\delta^{-\frac{N}{p+1} +1} \varphi_0\left( \frac{x-\xi}{\delta} \right) \right)+ \tilde \zeta_\delta, \]
and
\[ P\Psi^i_{\delta, \xi} = \Psi^i_{\delta, \xi} +\partial_i \left(\delta^{-\frac{N}{q+1} +1} \psi_0\left( \frac{x-\xi}{\delta} \right) \right)+ \tilde s_\delta. \]
where
\begin{align*} |\tilde \zeta_\delta(x)| &\le  
\begin{cases}
C\, \frac{\delta^{-\frac{N}{p+1} + \min \{\gamma, 1\}  - 1} (\ln \delta)^\sigma}{(1+|x-\xi|/\delta)^{\gamma}}, & \text{ if $i=0$}\\
C\, \frac{\delta^{-\frac{N}{p+1}+ \min \{\gamma, 1\}  - 1 } (\ln \delta)^\sigma}{(1+|x-\xi|/\delta)^{\gamma+1}}, & \text{ if $i\ne 0$,}
\end{cases} \end{align*}
and 
\begin{align*} |\tilde s_\delta(x)| &\le 
\begin{cases}   C\, \frac{\delta^{-\frac{N}{q+1}}(\ln \delta)^\tau}{(1+|x-\xi|/\delta)^{N-3}} & \text{ if $i=0$}\\
C\, \frac{\delta^{-\frac{N}{q+1} }(\ln \delta)^\tau}{(1+|x-\xi|/\delta)^{N-2}} & \text{ if $i\ne 0$}
 \end{cases} \end{align*}
where $\sigma=1$ if $\gamma=1,2$, $\sigma=0$ otherwise; whereas $\tau=1$ if $N=4, 5$, $\tau=0$ otherwise. 
Moreover,
\[ \tilde \zeta_\delta = O(\delta^{-\frac{N}{p+1} + \min \{\gamma, 1\}} (\ln \delta)^{\hat \sigma}) \]
\[ \tilde s_\delta = O(\delta^{-\frac{N}{q+1} + 1} (\ln \delta)^{\hat \tau}), \]
where $\hat \sigma =1$ if $\gamma=1$, $\hat \sigma =0$ otherwise, and $\hat \tau =1$ if $N=4$, $\hat \tau=0$ otherwise.

\end{proposition}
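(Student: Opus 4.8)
The plan is to obtain Proposition \ref{prop:expansion pphi} as a \emph{differentiated} version of Proposition \ref{prop:stima phie}. The starting point is that, with the abbreviation $\partial_0=\partial_\delta$ and $\partial_i=\partial_{\tau_i}$ for $i=1,\dots,N-1$, one has $\Phi^i_{\delta,\xi}=\partial_i U_{\delta,\xi}$ and $\Psi^i_{\delta,\xi}=\partial_i V_{\delta,\xi}$, and that the Neumann problem \eqref{proj} defining the projection is linear with a boundary operator independent of $(\delta,\xi)$; hence $\partial_i$ commutes with $P$, so that
\[ P\Phi^i_{\delta,\xi}=\partial_i\bigl(PU_{\delta,\xi}\bigr),\qquad P\Psi^i_{\delta,\xi}=\partial_i\bigl(PV_{\delta,\xi}\bigr). \]
Differentiating the two expansions of Proposition \ref{prop:stima phie} in $(\delta,\tau_i)$ then yields at once the claimed identities, with $\tilde\zeta_\delta:=\partial_i\zeta_\delta$ and $\tilde s_\delta:=\partial_i s_\delta$; thus the whole content of the statement is the set of pointwise and $L^\infty$ bounds on $\tilde\zeta_\delta$ and $\tilde s_\delta$.

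To establish these one cannot merely differentiate the estimates of Proposition \ref{prop:stima phie}; the strategy is to repeat its proof, Steps~1--3, for the differentiated data. Writing $\zeta_\delta=\varphi_1+\varphi_2+\varphi_3$ as there, the functions $\partial_i\varphi_1,\partial_i\varphi_2,\partial_i\varphi_3$ solve the Neumann problems obtained by applying $\partial_i$ to the ones defining $\varphi_1,\varphi_2,\varphi_3$: in particular $\partial_i\varphi_1$ is $(-\Delta+\mu)$-harmonic with boundary datum $\partial_i(\partial_\nu\varphi_1)$, while $\partial_i\varphi_2$ and $\partial_i\varphi_3$ solve homogeneous Neumann problems whose right-hand sides are $\partial_i$ of those for $\varphi_2$ and $\varphi_3$. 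The only genuinely new analytic inputs are the decay profiles of the differentiated data. Differentiation in $\delta$ multiplies both the bubble $U_{\delta,\xi}$ and the corrector $\delta^{-\frac{N}{p+1}+1}\varphi_0((x-\xi)/\delta)$ by an extra power $\delta^{-1}$, leaving their radial decay (that of $U$, respectively $(1+|y|)^{-\gamma}$) essentially unchanged, the needed gradient and Hessian bounds being those of Lemma \ref{lem: stima phi} together with \eqref{impo1}--\eqref{impo2}. Differentiation in a tangential direction $\tau_i$ again inserts a factor $\delta^{-1}$ but replaces $U_{\delta,\xi}$ and the corrector by (rescaled) first spatial derivatives of $U$ and of $\varphi_0$, which by \eqref{impo2} and Lemma \ref{lem: stima phi} decay one power faster; this is precisely why the decay exponent in $1+|x-\xi|/\delta$ becomes $\gamma+1$ (respectively $N-2$) instead of $\gamma$ (respectively $N-3$) when $i\neq0$. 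Plugging these facts into the Poisson-kernel estimate for $\partial_i\varphi_1$ (the differentiated version of \eqref{stima 1}--\eqref{stima 2}) and the interior Green-function estimates of type \eqref{stima gamma} for $\partial_i\varphi_2$ and $\partial_i\varphi_3$, every power of $\delta$ drops by one unit relative to Proposition \ref{prop:stima phie}, the two equivalent forms of each bound (sharp in $\delta$ versus sharp in the decay, interchangeable on the bounded domain $\Omega$) behave exactly as there, the logarithmic thresholds $\gamma=1,2$ and $N=4,5$ are unaffected, and for $i\neq0$ the decay improves by one power. This gives the stated bounds for $\tilde\zeta_\delta$, and the entirely parallel computation with $(\psi_0,V)$ in place of $(\varphi_0,U)$, recalling $V(r)\sim r^{-(N-2)}$, gives those for $\tilde s_\delta$.

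The step I expect to be the main obstacle, exactly as in Proposition \ref{prop:stima phie}, is the boundary-layer term $\partial_i\varphi_1$: one has to differentiate the Taylor expansions \eqref{stima 1}--\eqref{stima 2} of the normal derivatives of $U_{\delta,\xi}$ and of the corrector near the curved part of $\partial\Omega$, and check that the extra $\delta^{-1}$ -- and, when $i\neq0$, the gain of one power of decay -- propagate correctly through the splitting of the Green-function integral over $\partial\Omega_\delta\cap B_s(0)$ and $\partial\Omega_\delta\cap B_s^c(0)$, while keeping track of the logarithmic factor that survives in the borderline cases. Since all of this is a routine, if lengthy, repetition of the arguments already carried out for Proposition \ref{prop:stima phie} (see also \cite{ry,PSaT,rewe}), in the write-up I would only indicate these modifications and omit the computations.
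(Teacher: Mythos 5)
Your proposal is correct and follows essentially the same route as the paper, which itself only states that the result follows ``by completely similar arguments'' to Proposition \ref{prop:stima phie}: one reruns Steps 1--3 with the differentiated data, tracking the extra factor $\delta^{-1}$ and, for $i\neq 0$, the one-power gain of decay coming from Lemma \ref{lem: stima phi} and \eqref{impo2}. Your preliminary observation that $\partial_i$ commutes with the projection $P$ (so that $\tilde\zeta_\delta=\partial_i\zeta_\delta$, $\tilde s_\delta=\partial_i s_\delta$) is a clean and correct way to organize this, consistent with the intended definition of $P\Phi^i_{\delta,\xi}$, $P\Psi^i_{\delta,\xi}$.
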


\section*{Acknowledgments.}

Delia Schiera is partially supported by the Portuguese government through FCT - Funda\c c\~ao para a Ci\^encia e a Tecnologia, I.P., under the projects UID/MAT/04459/2020 and PTDC/MAT-PUR/1788/2020, and under the Scientific Employment Stimulus - Individual Call (CEEC Individual), https://doi.org/10.54499/2020.02540.CEECIND/CP1587/CT0008. Angela Pistoia acknowledges support of 
INDAM-GNAMPA project ``Problemi di doppia curvatura su variet\`a  a bordo e legami con le EDP di tipo ellittico'' and of the project 
``Pattern formation in nonlinear phenomena'' funded by the MUR Progetti di Ricerca di Rilevante Interesse Nazionale (PRIN) Bando 2022 grant 
20227HX33Z.

\bibliography{biblio}
\bibliographystyle{abbrv}

\bigskip
\noindent \textbf{Angela Pistoia}\\
Dipartimento di Scienze di Base e Applicate per l’Ingegneria\\
Sapienza Università di Roma\\
Via Scarpa 16, 00161 Roma, Italy\\
\texttt{angela.pistoia@uniroma1.it} 
\vspace{.3cm}

\noindent \textbf{Delia Schiera}\\
Departamento de Matemática do Instituto Superior Técnico\\
Universidade de Lisboa\\
Av. Rovisco Pais\\
1049-001 Lisboa, Portugal\\
\texttt{delia.schiera@tecnico.ulisboa.pt}

\end{document}